\newtheorem{thm}{Theorem}[section]
\theoremstyle{definition} 
\newtheorem{rem}[thm]{Remark}
\newtheorem{question}[thm]{Question}
\newtheorem{prop}[thm]{Proposition}
\begin{document}
%
%
%
%
%
%
%
\title
{Notes on constructions of knots with the same trace}
\author{Keiji Tagami}
\keywords{knot, $m$-trace, annulus presentation, annulus twist, dualizable pattern}
\address{Department of Fisheries Distribution and Management, 
National Fisheries University, 
Shimonoseki, Yamaguchi 759-6595 
JAPAN
}
\email{tagami@fish-u.ac.jp}
\date{\today}
\maketitle
%
\begin{abstract}
The $m$-trace of a knot is the $4$-manifold obtained from $\mathbf{B}^4$ by attaching a $2$-handle along the knot with $m$-framing. 
In 2015, Abe, Jong, Luecke and Osoinach introduced a technique to construct infinitely many knots with the same $m$-trace, which is called the operation $(\ast m)$. 
In this paper, we prove that their technique can be explained in terms of Gompf and Miyazaki's dualizable pattern. 
In addition, we show that the family of knots admitting the same $4$-surgery given by Teragaito can be explained by the operation $(\ast m)$.  
\end{abstract}
\section{Introduction}
For an integer $m$, the {\it $m$-trace} $X_{K}(m)$ of a knot $K$ is the $4$-manifold obtained from $\mathbf{B}^4$ by attaching a $2$-handle along the knot with $m$-framing. 
On techniques to construct knots with the same trace, the following are known. 
\begin{itemize}
\item Abe, Jong, Luecke and Osoinach \cite{AJLO} introduced a technique to construct infinitely many knots with the same $m$-trace, which is based on ``annulus presentation" and called {\it the operation $(\ast m)$} 
\footnote{
In \cite[Section~3.1.2]{AJLO}, we use $(\ast n)$ instead of $(\ast m)$.
}
(for annulus presentations and the operation $(\ast m)$, see Section~\ref{sec:annulus}). 
\item Miller and Piccirillo \cite{Miller-Piccirillo} constructed a pair of knots with the same $m$-trace by utilizing dualizable patterns (for dualizable patterns, see Section~\ref{sec:dualizable1}). 
\end{itemize}
Miller and Piccirillo \cite{Miller-Piccirillo} explained the operation $(\ast m)$ in terms of dualizable patterns in the case $m=0$ by constructing a dualizable pattern from an annulus presentation with some condition. 
\par 
In this paper, we proved that, for any $m\in \mathbf{Z}$, the operation $(\ast m)$ can be also explained in terms of dualizable patterns (Theorem~\ref{thm:star-dualizable}). 
As an application, we directly draw the duals to Miller and Piccirillo's dualizable patterns 
(Theorem~\ref{thm:main2} and Figure~\ref{figure:dual}). 
In addition, we explain the family of knots admitting the same $4$-surgery given by Teragaito \cite{Teragaito} in terms of the operation $(\ast m)$ (Section~\ref{sec:Teragaito}). 
We also some observations in the final section. 
Throughout this paper, 
\begin{itemize}
\item unless specifically mentioned, all knots and links are smooth and unoriented, and all other manifolds are smooth and oriented, 
\item for an $n$-component link $L_{1}\cup\dots \cup L_{n}$, we denote the $3$-manifold obtained from $\mathbf{S}^3$ by $m_i$-surgery on a knot $L_i$ for any $i$ by $M_{L_{1}\cup\dots \cup L_{n}}(m_1,\dots, m_n)$, 
\item we denote a tubular neighborhood of a knot $K$ in a $3$-manifold by $\nu(K)$, 
\item we denote the unknot in $\mathbf{S}^3$ by $U$. 
\end{itemize}
\section{Annulus twist, annlus presentation and the operation $(\ast m)$}\label{sec:annulus}
\subsection{Annulus twist and annulus presentation}\label{sec:annulus-twist-pre}
Let $A\subset \mathbf{S}^{3}$ be an embedded annulus with ordered boundaries $\partial A=c_{1}\cup c_{2}$. 
An {\it $n$-fold annulus twist along $A$} is to apply $(lk(c_1,c_2)+1/n)$-surgery along $c_{1}$ and $(lk(c_1,c_2)-1/n)$-surgery along $c_{2}$,  where we give $c_1$ and $c_2$ parallel orientations. 
We see that the resulting manifold obtained by an annulus twist is also $\mathbf{S}^3$. 
\par
Let $A\subset \mathbf{S}^{3}$ be an embedded annulus with $\partial A=c_{1}\cup c_{2}$. 
Take an embedding of a band $b\colon I\times I\rightarrow \mathbf{S}^{3}$ such that 
\begin{itemize}
\item $b(I\times I)\cap \partial A=b(\partial I\times I)$, 
\item $b(I\times I)\cap \operatorname{Int} A$ consists of ribbon singularities, and 
\item $A\cup b(I\times I)$ is an immersion of an orientable surface, 
\end{itemize}
where $I=[0, 1]$. 
If a knot $K\subset \mathbf{S}^3$ is isotopic to the knot $(\partial A\setminus b(\partial I\times I))\cup b(I\times \partial I)$, 
then we call $(A, b)$ an {\it annulus presentation} of $K$. 
An annulus presentation $(A,b)$ is {\it special} if $A$ is unknotted and $\operatorname{lk}(c_1, c_2)=\pm 1$ (that is, $A$ is $\pm 1$-full twisted). 
Let $K$ be a knot with an annulus presentation $(A, b)$. 
Let $A'\subset A$ be a shrunken annulus with $\partial A'=c'_{1}\cup c'_{2}$ which satisfies the following: 
\begin{itemize}
\item $\overline{A\setminus A'}$ is a disjoint union of two annuli, 
\item each $c'_{i}$ is isotopic to $c_{i}$ in $\overline{A\setminus A'}$ for $i=1,2$ and 
\item $A\setminus (\partial A\cup A')$ does not intersect $b(I\times I)$. 
\end{itemize}
Then, by $A^{n}(K)$, 
we denote the knot obtained from $K$ by the $n$-fold annulus twist along $A'$. 
For simplicity, we denote $A^{1}(K)$ by $A(K)$ and $A^{0}(K)$ by $K$. 
\begin{rem}
We find many examples of annulus presentations in \cite{AJOT, AJLO, abe-tagami2, tagami6}. 
In this paper, for an annulus presentation $(A,b)$, we often draw the attaching regions $A\cap b$ by bold arcs and we omit the band $b$. 
\end{rem}
\par
By utilizing Osoinach's work \cite[Theorem 2.3]{Osoinach}, for a knot $K$ with an annulus presentation $(A, b)$, we see that 
$M_{K}(0)$ and $M_{A^{n}(K)}(0)$ are orientation-preservingly homeomorphic for any $n\in\mathbf{Z}$. 
In particular, a homeomorphism $\phi_{n}\colon M_{K}(0) \rightarrow M_{A^{n}(K)}(0)$ is given as in Figure~\ref{figure:Osoinach-homeo}, which is explicitly given by Teragaito \cite{Teragaito}. 
We call $\phi_{n}$ the $n$-th Osoinach-Teragaito's homeomorphism. 
%
Moreover, if $(A,b)$ is special, by applying Abe, Jong, Omae and Takeuchi's result \cite[Theorem~2.8]{AJOT} to the knot, we see that 
the homeomorphism $\phi_{n}$ extends to an orientation-preserving diffeomorphism $\Phi_{n}\colon X_{K}(0)\rightarrow X_{A^{n}(K)}(0)$ for any $n\in\mathbf{Z}$. 
\par
As a consequence, we obtain the following. 
\begin{thm}\label{thm:Osoinach} 
Let $K\subset \mathbf{S}^3$ be a knot with an annulus presentation $(A, b)$. 
Then, there is an orientation-preservingly homeomorphism $\phi_{n}\colon M_{K}(0) \rightarrow M_{A^{n}(K)}(0)$ for any $n\in\mathbf{Z}$. 
In particular, $\phi_{n}$ is given as in Figure~\ref{figure:Osoinach-homeo}. 
Moreover, if $(A,b)$ is special, $\phi_{n}$ extends to an orientation-preserving diffeomorphism $\Phi_{n}\colon X_{K}(0)\rightarrow X_{A^{n}(K)}(0)$. 
\end{thm}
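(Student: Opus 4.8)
The plan is to deduce the first assertion from Osoinach's Theorem~2.3 in \cite{Osoinach}, made explicit through the Kirby-calculus description of Teragaito in \cite{Teragaito}, and to deduce the ``moreover'' part from Abe--Jong--Omae--Takeuchi's Theorem~2.8 in \cite{AJOT}.

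First I would exhibit $M_{K}(0)$ and $M_{A^{n}(K)}(0)$ as surgeries on one and the same link. Since $A'\subset A$ is a shrunken annulus with $A\setminus(\partial A\cup A')$ disjoint from $b(I\times I)$ and with each $c'_{i}$ isotopic to $c_{i}$ in $\overline{A\setminus A'}$, the annulus $A'$ lies in the exterior of the knot $K=(\partial A\setminus b(\partial I\times I))\cup b(I\times \partial I)$ and therefore survives into $M_{K}(0)$. Set $\ell=\operatorname{lk}(c_{1},c_{2})=\operatorname{lk}(c'_{1},c'_{2})$. Then $M_{K}(0)$ is surgery on $K\cup c'_{1}\cup c'_{2}$ with coefficients $(0,\infty,\infty)$, while an $n$-fold annulus twist along $A'$, i.e.\ surgery with coefficients $(\ell+1/n,\ell-1/n)$ on $c'_{1}\cup c'_{2}$, carries $(\mathbf{S}^{3},K)$ to $(\mathbf{S}^{3},A^{n}(K))$; the point of the annulus-twist coefficients, which is the crux of Osoinach's argument, is that this twist restores $\mathbf{S}^{3}$ and leaves the $0$-framing on $K$ unchanged, so that $M_{A^{n}(K)}(0)$ is surgery on the same link $K\cup c'_{1}\cup c'_{2}$ with coefficients $(0,\ell+1/n,\ell-1/n)$. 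Osoinach's Theorem~2.3 applies verbatim to this configuration and yields an orientation-preserving homeomorphism $\phi_{n}\colon M_{K}(0)\to M_{A^{n}(K)}(0)$; carrying out the Rolfsen twists and handle slides of \cite{Teragaito} in a regular neighborhood of $A'$ makes $\phi_{n}$ explicit and produces the local picture of Figure~\ref{figure:Osoinach-homeo}.

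For the ``moreover'' part, assume $(A,b)$ is special, so $A$ is unknotted and $\ell=\pm 1$; then $A'$ is a $\pm 1$-full twisted unknotted annulus. In this case the annulus twist along $A'$ can be rewritten, via Rolfsen twists on $c'_{1}$ and $c'_{2}$, as a sequence of blow-ups and blow-downs of $\pm 1$-framed unknots together with handle slides, all of which already make sense in the $4$-manifold $X_{K}(0)$ obtained from $\mathbf{B}^{4}$ by attaching a single $0$-framed $2$-handle along $K$, and which create no net new $2$-handle. This is exactly the situation treated by Abe--Jong--Omae--Takeuchi's Theorem~2.8, which therefore promotes the boundary homeomorphism $\phi_{n}$ to an orientation-preserving diffeomorphism $\Phi_{n}\colon X_{K}(0)\to X_{A^{n}(K)}(0)$ restricting to $\phi_{n}$ on the boundary.

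The substantive content is entirely contained in the three cited results, so the remaining work is bookkeeping: verifying that the shrunken-annulus conditions place us in precisely the configurations treated in \cite{Osoinach} and \cite{AJOT}, and that it is the $0$-framing on $K$ that makes the annulus twist invisible at the level of $3$-manifolds (respectively $4$-manifolds, when $(A,b)$ is special). The step requiring the most care is orientations: I would track each Rolfsen twist, blow-down and handle slide to confirm that the resulting homeomorphism --- and, in the special case, the diffeomorphism --- is orientation-preserving, which is the reason for Teragaito's normalization of $\phi_{n}$ in \cite{Teragaito} and for the precise form of the statement of \cite[Theorem~2.8]{AJOT}.
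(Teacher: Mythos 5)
Your proposal matches the paper's treatment: Theorem~\ref{thm:Osoinach} is stated there as a direct consequence of Osoinach's Theorem~2.3, with the explicit homeomorphism $\phi_n$ taken from Teragaito's construction (Figure~\ref{figure:Osoinach-homeo}), and the extension to $\Phi_n$ on the $0$-traces obtained by applying \cite[Theorem~2.8]{AJOT} in the special case. The surgery-description bookkeeping you add (same link, coefficients $(0,\ell\pm 1/n)$, preservation of the $0$-framing) is exactly the content implicitly delegated to those citations, so the argument is correct and essentially identical to the paper's.
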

\begin{figure}[h]
\centering
\includegraphics[scale=0.7]{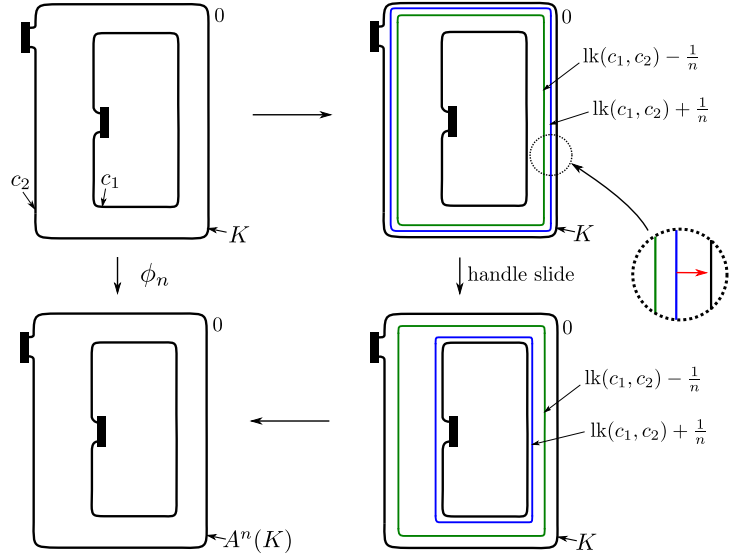}
\caption{
(color online) Osoinach-Teragaito's homeomorphism $\phi_{n}$. For simplicity we draw $A$ as a flat annulus although $A$ may be knotted and twisted. }
\label{figure:Osoinach-homeo}
\end{figure}
\subsection{Operation $(\ast m)$}
%
Let $K$ be a knot with a special annulus presentation $(A,b)$. 
Let $\gamma_{A(K)}\subset \mathbf{S}^3\setminus \nu(A(K))$ be a curve depicted in Figure~\ref{figure:gamma}. 
Remark that the definition of $\gamma_{A(K)}$ depends on the twist of $A$. 
\begin{figure}[h]
\centering
\includegraphics[scale=0.7]{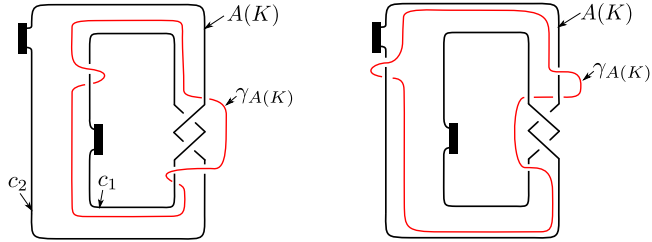}
\caption{(color online) The curve $\gamma_{A(K)}$ 
}\label{figure:gamma}
\end{figure}
Denote the knot obtained from $A(K)$ by twisting $m$ times along $\gamma_{A(K)}$ by $T_{m}(A(K))$. 
In \cite[Section~3.1.2]{AJLO}, the operation $K\mapsto T_{m}(A(K))$ is called {\it the operation $(\ast m)$}. 
Then, Abe, Jong, Luecke and Osoinach \cite{AJLO} proved the following theorem. 
\begin{thm}[{\cite[Theorem~3.7 and Theorem~3.10]{AJLO}}]\label{thm:AJLO1}
Let $K$ be a knot with a special annulus presentation $(A,b)$. 
Then, there is an orientation-preservingly homeomorphism $\psi_{m}\colon M_{K}(m)\rightarrow M_{T_{m}(A(K))}(m)$ which extends to a diffeomorphism $\Psi_{m}\colon X_{K}(m)\rightarrow X_{T_{m}(A(K))}(m)$ for any $m\in\mathbf{Z}$. 
\end{thm}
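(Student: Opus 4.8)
The plan is to prove the $4$-dimensional statement and recover $\psi_{m}$ as $\Psi_{m}|_{\partial}$; since every move below is a twist along an unknot in $\mathbf{S}^{3}$ or a handle slide, orientations are preserved automatically. The strategy is to realize the operation $(\ast m)$ as surgery on a framed link in the complement of $K$, and then to convert that surgery picture into a Kirby-calculus identification of the two $m$-traces.

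First, unwind $(\ast m)$. Let $A'\subset A$ be the shrunken annulus with $\partial A'=c'_{1}\cup c'_{2}$, so that $A(K)=A^{1}(K)$ is the image of $K$ after $(\ell+1)$-surgery on $c'_{1}$ and $(\ell-1)$-surgery on $c'_{2}$, where $\ell=\operatorname{lk}(c'_{1},c'_{2})=\pm1$ because $(A,b)$ is special; we treat $\ell=-1$, the case $\ell=+1$ being symmetric. The annulus twist is then $0$-surgery on $c'_{1}$ together with $(-2)$-surgery on $c'_{2}$, and since $A$ is unknotted and $\pm1$-twisted, $c'_{1}\cup c'_{2}$ is a Hopf link on which this surgery returns $\mathbf{S}^{3}$. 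Next, $T_{m}(A(K))$ is the image of $A(K)$ under an $m$-fold twist along $\gamma_{A(K)}$, i.e.\ under $(-1/m)$-surgery on the unknot $\gamma_{A(K)}$, which by Figure~\ref{figure:gamma} may be taken disjoint from $c'_{1}\cup c'_{2}$ and from $K$ and to satisfy $\operatorname{lk}(\gamma_{A(K)},K)=0$. Hence $M_{T_{m}(A(K))}(m)$ is obtained by surgery on the framed link $K(m)\cup c'_{1}(0)\cup c'_{2}(-2)\cup\gamma_{A(K)}(-1/m)$ in $\mathbf{S}^{3}$. One then has to check that these three auxiliary surgeries carry the $m$-framing on $K$ exactly to the $m$-framing on $T_{m}(A(K))$; this is the point of the curve $\gamma_{A(K)}$, whose twist is calibrated to cancel the framing shift the annulus twist produces when $m\neq0$ (for $m=0$ the $\gamma$-twist is trivial, $T_{0}(A(K))=A(K)$, and one is back in Theorem~\ref{thm:Osoinach}).

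Finally, convert this into a diffeomorphism. Replace $\gamma_{A(K)}(-1/m)$ by an integral chain of unknots, so that the framed link above becomes a Kirby diagram of a $2$-handlebody with boundary $M_{T_{m}(A(K))}(m)$. Starting instead from the standard diagram of $X_{K}(m)$, one exhibits a sequence of handle slides, handle cancellations, and auxiliary blow-ups (guided by the surgery picture: slide the $K$-handle over the Hopf pair and over the chain, then eliminate all auxiliary handles) ending at the standard diagram of $X_{T_{m}(A(K))}(m)$. The step I expect to be the main obstacle is carrying this out so that it is \emph{free of leftover $\mathbf{CP}^{2}$ and $\overline{\mathbf{CP}}{}^{2}$ summands}: a careless treatment of the chain replacing the $(-1/m)$-surgery, together with the $\pm1$'s produced along the way, only gives a diffeomorphism after connected sum with such summands, and promoting it to an honest diffeomorphism is exactly where the special hypotheses ``$A$ unknotted'' and ``$\operatorname{lk}(c_{1},c_{2})=\pm1$'' enter (equivalently, they allow $\gamma_{A(K)}$, after being slid over the $2$-handle of $X_{K}(m)$, to be isotoped to bound a correctly framed embedded disk there). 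This is the general-$m$ counterpart of the $m=0$ computation of Abe--Jong--Omae--Takeuchi \cite[Theorem~2.8]{AJOT}. A conceptually cleaner alternative --- and the route this paper ultimately develops --- is to bypass the Kirby gymnastics: encode $(A,b)$ as a pattern $P$ in a solid torus, show that $P$ is an $m$-dualizable pattern, and identify $K$ and $T_{m}(A(K))$ with the images of $P$ and of its dual, so that the diffeomorphism of $m$-traces follows from the general theory of dualizable patterns (the content of Theorem~\ref{thm:star-dualizable}).
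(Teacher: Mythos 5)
You should first note that the paper contains no proof of Theorem~\ref{thm:AJLO1} at all: it is quoted verbatim from \cite{AJLO} (their Theorems~3.7 and 3.10), so there is no internal argument to compare with, and your proposal must stand on its own. As written it is a plan rather than a proof, and both halves have genuine gaps. On the $3$-dimensional side, your framed link $K(m)\cup c'_1(0)\cup c'_2(-2)\cup\gamma_{A(K)}(-1/m)$ together with the two claims ``$\operatorname{lk}(\gamma_{A(K)},K)=0$'' and ``the $\gamma$-twist is calibrated to cancel the framing shift'' is internally inconsistent: twisting along a curve of zero linking number changes no framing, so it can cancel nothing. In fact $\gamma_{A(K)}$ is the image of the meridian $\alpha_K$ under the Osoinach--Teragaito homeomorphism of $0$-surgeries (see the remark following Theorem~\ref{thm:AJLO1}), hence it generates $H_1(M_{A(K)}(0))$ and satisfies $\operatorname{lk}(\gamma_{A(K)},A(K))=\pm1$. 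The correct bookkeeping, which is the chain used later in the proofs of Theorems~\ref{thm:star-dualizable} and \ref{thm:main2}, is $M_{K}(m)\cong M_{K\cup\alpha_K}(0,-1/m)\cong M_{A(K)\cup\gamma_{A(K)}}(0,-1/m)\cong M_{T_m(A(K))}(0+m\cdot\operatorname{lk}(\gamma_{A(K)},A(K))^2)=M_{T_m(A(K))}(m)$: one puts the auxiliary $-1/m$-framed curve on the \emph{meridian}, transports it through $\phi_{1}$, and only then blows it down. Your version instead keeps the $m$-framing on $K$ and the annulus-twist coefficients on $c'_1,c'_2$; for that picture to yield $M_{T_m(A(K))}(m)$ the framing shift of the annulus twist would have to equal $-m\cdot\operatorname{lk}(\gamma_{A(K)},A(K))^2$ for every $m$ simultaneously, which a fixed configuration cannot do, so the step you defer with ``one then has to check'' cannot be completed as set up.

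On the $4$-dimensional side, the extension to a diffeomorphism $\Psi_m$ of traces --- the substantive half of the statement --- is not established: you describe a hoped-for sequence of handle slides and blow-ups and explicitly flag the elimination of $\mathbf{CP}^2$ and $\overline{\mathbf{CP}}{}^2$ summands as ``the main obstacle,'' but that obstacle is exactly the content of \cite[Theorem~3.10]{AJLO} (following \cite[Theorem~2.8]{AJOT}, where the point is that the boundary homeomorphism carries a curve along which the $2$-handle can be re-attached, e.g.\ a curve isotopic to a meridian in the surgered manifold, so that the homeomorphism extends over the handle attachment); naming the obstacle is not overcoming it. Finally, the ``conceptually cleaner alternative'' you invoke is circular within this paper: the proof of Theorem~\ref{thm:star-dualizable} uses the homeomorphism $\psi_m$ of Theorem~\ref{thm:AJLO1} (to verify $\psi_m(\beta_K^{+})=\alpha_{T_m(A(K))}$), and Theorem~\ref{thm:MP} by itself only gives $X_{P_{+}(U)}(m)\cong X_{\tau_m(P_{+}^{\ast})(U)}(m)$ without identifying $\tau_m(P_{+}^{\ast})(U)$ with $T_m(A(K))$; so that route cannot be used to prove the present theorem without an independent argument.
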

Concretely, $\psi_{m}$ is given as in Figure~\ref{figure:AJLO-homeo} for the case $A$ is $+1$ twisted. 
Similarly, we also obtain $\psi_{m}$ for the case $A$ is $-1$ twisted. 
%
%
\begin{rem}
Note that Osoinach-Teragaito's homeomorphism induces a homeomorphism $\phi_{+1}\colon (M_{K}(0), \alpha_{K})\rightarrow (M_{A(K)}(0), \gamma_{A(K)})$, where $\alpha_{K} \subset \mathbf{S}^{3}\setminus \nu(K)$ is a meridian of $K$ and we regard $\alpha_{K}$ and $\gamma_{A(K)}$ as curves in $M_{K}(0)$ and $M_{A(K)}(0)$ 
(see also the bottom arrow in Figure~\ref{figure:AJLO-homeo}). 
\end{rem}
\begin{figure}[h]
\centering
\includegraphics[scale=0.72]{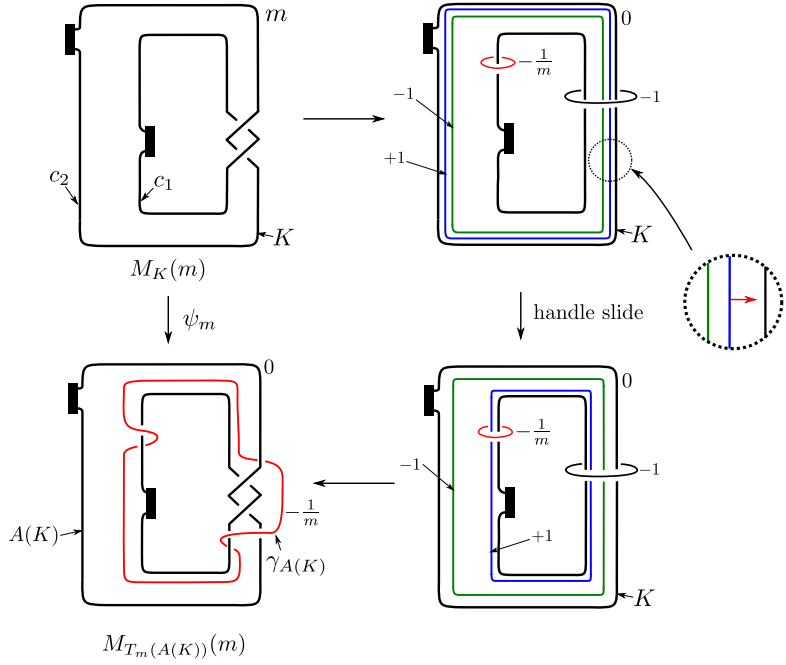}
\caption{
(color online) The homeomorphism $\psi_{m}\colon M_{K}(m)\rightarrow M_{T_{m}(A(K))}(m)$ for the case $A$ is $+1$ twisted. 
For the case $A$ is $-1$ twisted, we define $\psi_{m}$ by the same way. 
}
\label{figure:AJLO-homeo}
\end{figure}
\section{Relation between annulus presentation and dualizable pattern}\label{sec:dualizable1}
\subsection{Dualizable pattern}
Here, we recall the definition of dualizable patterns, which is firstly given by Gompf and Miyazaki \cite{Gompf-Miyazaki} and developed by Miller and Piccirillo \cite{Miller-Piccirillo} (see also \cite{tagami6}). 
\par 
Let $P\colon \mathbf{S}^1\rightarrow V$ be an oriented knot in a solid torus $V=\mathbf{S}^1\times D^2$. 
Suppose that the image $P(\mathbf{S}^1)$ is not null-homologous in $V$. 
Such a $P$ is called a {\it pattern}. 
By an abuse of notation, we use the notation $P$ for both a map and its image. 
Define $\lambda_{V}$, $\mu_{P}$, $\mu_{V}$ and $\lambda_{P}$ as follows: 
\begin{itemize}
\item put $\lambda_{V}=\mathbf{S}^1\times \{x_0\}\subset \partial V \subset V$ for some $x_0\in \partial D^{2}$ and orient $\lambda_{V}$ so that $P$ is homologous to $r\lambda_{V}$ in $V$ for some positive $r\in\mathbf{Z}_{>0}$, 
\item define $\mu_{P}\subset V$ by a meridian of $P$ and orient $\mu_{P}$ so that the linking number of $P$ and $\mu_{P}$ is $1$, 
\item put $\mu_{V}=\{x_1\}\times \partial D^2\subset \partial V \subset V$ for some $x_{1}\in \mathbf{S}^1$ and orient $\mu_{V}$ so that $\mu_{V}$ is homologous to $s\mu_{P}$ in $V\setminus \nu(P)$ for some positive $s\in\mathbf{Z}_{>0}$, 
\item define $\lambda_{P}$ by a longitude of $P$ which is homologous to $t\lambda_{V}$ in $V\setminus \nu(P)$  for some positive $t\in\mathbf{Z}_{>0}$. 
\end{itemize}
\par
For an oriented knot $K\subset \mathbf{S}^3$, let $\iota_{K}\colon V\rightarrow \mathbf{S}^3$ be an embedding which identifies $V$ with $\overline{\nu(K)}$ and sends $\lambda_{V}$ to an oriented curve on $\partial \overline{\nu(K)}$ which is null-homologous in $\mathbf{S}^3\setminus \nu(K)$ and isotopic to $K$ in $\mathbf{S}^3$. 
Then $\iota_{K}\circ P\colon \mathbf{S}^1\rightarrow \mathbf{S}^3$ represents an oriented knot. 
The knot is called the {\it satellite} of $K$ with pattern $P$ and denoted by $P(K)$. 
\par
A pattern $P\colon \mathbf{S}^{1}\rightarrow V$ is {\it dualizable} if there is a pattern $P^{\ast}\colon \mathbf{S}^{1}\rightarrow V^{\ast}$ and an orientation-preserving homeomorphism $f\colon V\setminus \nu(P)\rightarrow V^{\ast}\setminus \nu(P^{\ast})$ such that 
$f(\lambda_{V})=\lambda_{P^{\ast}}$, $f(\lambda_{P})=\lambda_{V^{\ast}}$, $f(\mu_{V})=-\mu_{P^{\ast}}$ and $f(\mu_{P})=-\mu_{V^{\ast}}$. 
\par
Miller and Piccirillo \cite[Proposition~2.5]{Miller-Piccirillo} introduced a convenient technique to determine whether a given pattern is dualizable as follows (see also \cite[Section~2]{Gompf-Miyazaki}). 
Define $\Gamma\colon \mathbf{S}^1\times D^{2} \rightarrow \mathbf{S}^1\times \mathbf{S}^2$ by $\Gamma(t,d)=(t,\gamma(d))$, where $\gamma\colon D^{2}\rightarrow \mathbf{S}^2$ is an arbitrary orientation preserving embedding. 
For any curve $c\colon \mathbf{S}^1\rightarrow \mathbf{S}^1\times D^{2}$, define $\widehat{c}=\Gamma\circ c \colon \mathbf{S}^1\rightarrow \mathbf{S}^1\times \mathbf{S}^2$. 
Then, we obtain the following proposition. 
\begin{prop}[{\cite[Proposition~2.5]{Miller-Piccirillo}}]\label{prop:MP}
A pattern $P$ in a solid torus $V$ is dualizable if and only if $\widehat{P}$ is isotopic to $\widehat{\lambda_{V}}$ in $\mathbf{S}^{1}\times \mathbf{S}^2$. 
\end{prop}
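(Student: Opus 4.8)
The plan is to prove both directions by translating the homeomorphism condition on $V\setminus\nu(P)$ into the isotopy condition on $\widehat P$ in $\mathbf S^1\times\mathbf S^2$ via the standard observation that $\mathbf S^1\times\mathbf S^2$ is obtained from $V=\mathbf S^1\times D^2$ by gluing a second copy $V^{\ast}=\mathbf S^1\times D^2$ along $\partial V$ so that $\mu_V$ bounds a disk in $V^{\ast}$ and $\lambda_V$ bounds (with the $\mathbf S^2$ fibre) no disk but is the core direction; equivalently $\mathbf S^1\times\mathbf S^2 = V\cup_{\mathrm{id}} (\mathbf S^1\times D^2)$ where the second factor is exactly the complementary $\mathbf S^1\times D^2$ in the trivial $S^2$-bundle, and $\widehat P$ is just $P$ pushed into this larger manifold while $\widehat{\lambda_V}$ is the section $\mathbf S^1\times\{\mathrm{pt}\}$.

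For the ``only if'' direction, suppose $P$ is dualizable with dual $P^{\ast}$ and homeomorphism $f\colon V\setminus\nu(P)\to V^{\ast}\setminus\nu(P^{\ast})$ satisfying the four boundary conditions. First I would observe that $\mathbf S^1\times\mathbf S^2$ can be built as $(V\setminus\nu(P))$ glued to a solid torus along the torus $\partial\nu(P)$ (filling back in along $\mu_P$) and to another solid torus along $\partial V$ (filling along $\mu_V$); the point is that filling $\partial\nu(P)$ along $\mu_P$ and $\partial V$ along $\mu_V$ produces $\mathbf S^1\times\mathbf S^2$ precisely because in that manifold $\widehat P$ and $\widehat{\lambda_V}$ sit as described. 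Then the relations $f(\mu_V)=-\mu_{P^{\ast}}$ and $f(\mu_P)=-\mu_{V^{\ast}}$ say that $f$ carries the two filling slopes on the $V$-side to the two filling slopes on the $V^{\ast}$-side, hence $f$ extends to a homeomorphism $\mathbf S^1\times\mathbf S^2\to\mathbf S^1\times\mathbf S^2$ taking $\widehat P$ (the core of the solid torus filling $\partial V$, since $f(\lambda_V)=\lambda_{P^{\ast}}$ and the complement of $\nu(P)$ fills to give $\widehat P$ as that core) to $\widehat{\lambda_{V^{\ast}}}$, which is $\widehat{\lambda_V}$ up to the ambient self-homeomorphism; since any orientation-preserving self-homeomorphism of $\mathbf S^1\times\mathbf S^2$ is isotopic to one preserving $\widehat{\lambda_V}$ (using that $\pi_0\mathrm{Diff}(\mathbf S^1\times\mathbf S^2)$ is well understood and acts appropriately on the relevant homology and on the $\mathbf S^1$-factor up to isotopy), we conclude $\widehat P$ is isotopic to $\widehat{\lambda_V}$.

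For the ``if'' direction, suppose $\widehat P$ is isotopic to $\widehat{\lambda_V}$ in $\mathbf S^1\times\mathbf S^2$. Then I would take the ambient isotopy realizing this and use it to identify a neighborhood of $\widehat P$ with a neighborhood of $\widehat{\lambda_V}$, hence to identify the exterior $\mathbf S^1\times\mathbf S^2\setminus\nu(\widehat P)$ with $\mathbf S^1\times\mathbf S^2\setminus\nu(\widehat{\lambda_V})$; the latter is a solid torus, call it $V^{\ast}$, namely the complementary $\mathbf S^1\times D^2$ of the section in the trivial bundle. Now $V\setminus\nu(P)$ sits inside $\mathbf S^1\times\mathbf S^2\setminus\nu(\widehat P)=V^{\ast}$ as the complement of an open solid torus (the image of $V^{\ast}_{\mathrm{old}}=\mathbf S^1\times D^2\setminus V$ together with $\nu(P)$), so $V^{\ast}$ decomposes as $(V\setminus\nu(P))$ union a solid torus; declaring $P^{\ast}$ to be the core of that glued-in solid torus gives the required dual pattern, and one checks the four slope conditions by carefully tracking $\mu_V,\lambda_V,\mu_P,\lambda_P$ through the identifications, using that meridians go to meridians of the complementary fillings and the orientation-reversal signs come from the standard mismatch between the two $D^2$-factors of $\mathbf S^1\times\mathbf S^2$.

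The main obstacle I expect is the bookkeeping of orientations and basis curves: making the four equations $f(\lambda_V)=\lambda_{P^{\ast}}$, $f(\lambda_P)=\lambda_{V^{\ast}}$, $f(\mu_V)=-\mu_{P^{\ast}}$, $f(\mu_P)=-\mu_{V^{\ast}}$ come out with exactly the stated signs, rather than up to an overall ambiguity, requires pinning down the orientation conventions for $\widehat P$ and $\widehat{\lambda_V}$ and the gluing, and verifying that the self-homeomorphism of $\mathbf S^1\times\mathbf S^2$ used in the first direction can be chosen to respect both orientation and the $\mathbf S^1$-direction. A secondary technical point is justifying that ``isotopic'' in $\mathbf S^1\times\mathbf S^2$ (ambient isotopy of knots) is strong enough to produce the homeomorphism of exteriors with control on the peripheral structure; this is routine but should be stated, perhaps citing \cite{Miller-Piccirillo} or \cite{Gompf-Miyazaki} for the precise form. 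Everything else is formal manipulation of Dehn fillings and solid-torus complements.
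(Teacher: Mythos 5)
The paper itself does not prove this proposition; it is imported verbatim from Miller--Piccirillo (Proposition~2.5, going back to Gompf--Miyazaki), so there is no in-paper argument to compare against, and your proposal must be judged on its own. Its structure is the standard one and is essentially correct: in the ``if'' direction the exterior of $\widehat{P}$ is a solid torus $V^{\ast}$ because it is homeomorphic to the exterior of $\widehat{\lambda_{V}}$, the dual $P^{\ast}$ is the core of the complementary solid torus $W=\overline{\mathbf{S}^{1}\times\mathbf{S}^{2}\setminus V}$, and $f$ is essentially the tautological identification of $\mathbf{S}^{1}\times\mathbf{S}^{2}\setminus(\nu(\widehat{P})\cup\nu(\mathrm{core}\,W))$ with itself, after which one checks the four peripheral conditions; in the ``only if'' direction the conditions $f(\mu_{V})=-\mu_{P^{\ast}}$, $f(\mu_{P})=-\mu_{V^{\ast}}$ let $f$ extend over the two Dehn fillings, exactly as you say. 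Two points need repair or sharpening. First, some of your parenthetical descriptions are off: in the ``if'' direction the complement of $V\setminus\nu(P)$ inside $\mathbf{S}^{1}\times\mathbf{S}^{2}\setminus\nu(\widehat{P})$ is just $W$ (the piece $\nu(P)=\nu(\widehat{P})$ has already been removed), not $W$ together with $\nu(P)$; and in the ``only if'' direction $\widehat{P}$ is the core of the solid torus filling $\partial\nu(P)$ along $\mu_{P}$, not of the one filling $\partial V$ --- your conclusion still stands, since $f$ carries that filling torus to the solid torus filling $\partial V^{\ast}$ along $\mu_{V^{\ast}}$, whose core is isotopic to $\widehat{\lambda_{V^{\ast}}}$. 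Second, the genuinely nontrivial input in the ``only if'' direction is the step you only gesture at: concluding that $\widehat{P}$, a knot in $\mathbf{S}^{1}\times\mathbf{S}^{2}$ whose exterior is a solid torus (equivalently, a core of a genus-one Heegaard splitting), is isotopic to $\mathbf{S}^{1}\times\{\mathrm{pt}\}=\widehat{\lambda_{V}}$; equivalently, that every self-homeomorphism of $\mathbf{S}^{1}\times\mathbf{S}^{2}$ preserves the isotopy class of $\mathbf{S}^{1}\times\{\mathrm{pt}\}$. This rests on the uniqueness of the genus-one Heegaard splitting of $\mathbf{S}^{1}\times\mathbf{S}^{2}$ (or on Gluck's determination of its mapping class group) and should be cited explicitly rather than left as ``$\pi_{0}\mathrm{Diff}$ is well understood''; you also implicitly use that the isotopy hypothesis forces the winding number of $P$ to be $1$, which is what makes $P^{\ast}$ a pattern with the stated orientation conventions. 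With those points made precise and the sign bookkeeping carried out, the argument is sound and agrees with the proof in the cited sources.
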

%
%
%
%
%
\par
Related to knot traces, the following are known. 
Let $P\subset V$ be a pattern. 
Let $\tau_{m}\colon V\rightarrow V$ be a homeomorphism given by twisting $m$ times along a meridian of $V$. 
It is known that if $P$ is dualizable then $\tau_m(P)$ is also dualizable and its dual is given by $\tau_{-m}(P^{\ast})$, where $P^{\ast}$ is the dual to $P$ (see \cite[Theorem~3.6]{Miller-Piccirillo} and \cite[Remark~4.6]{tagami6}). 
Moreover, we obtain the following. 
\begin{thm}[{\cite[Theorem~3.6]{Miller-Piccirillo} and \cite[Remark~4.6]{tagami6}}]\label{thm:MP}
Let $P$ be a dualizable pattern and $P^{\ast}$ be its dual. 
Then, we have $X_{P(U)}(m)\cong X_{\tau_{m}(P^{\ast})(U)}(m)$ for any $m\in \mathbf{Z}$. 
\end{thm}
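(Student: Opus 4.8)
The plan is to upgrade a homeomorphism of boundary $3$-manifolds to a diffeomorphism of the traces. The preliminary observation is that for any knot $J\subset\mathbf{S}^3$ the $m$-trace $X_J(m)$ is determined by the pair $(M_J(m),\mu_J)$, where $\mu_J$ denotes the meridian of $J$ viewed as a curve in $M_J(m)$: turning the handle decomposition $X_J(m)=\mathbf{B}^4\cup_{J,m}h^2$ upside down presents $X_J(m)$ as the union of $M_J(m)\times I$, a $2$-handle attached along $\mu_J\times\{1\}$, and a $4$-handle, the $2$-handle framing being the unique slope on $\partial\nu(\mu_J)\subset M_J(m)$ along which surgery yields $\mathbf{S}^3$ (unique by the Knot Complement Theorem, since the exterior of $\mu_J$ in $M_J(m)$ is $\mathbf{S}^3\setminus\nu(J)$), and the capping $4$-handle being unique. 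Hence it suffices to produce an orientation-preserving homeomorphism $g\colon M_{P(U)}(m)\to M_{\tau_m(P^{\ast})(U)}(m)$ carrying $\mu_{P(U)}$ to $\mu_{\tau_m(P^{\ast})(U)}$.

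Before constructing $g$ I would record that a dualizable pattern has winding number $1$: by Proposition~\ref{prop:MP}, $\widehat P$ is isotopic to $\widehat{\lambda_V}$ in $\mathbf{S}^1\times\mathbf{S}^2$, so $[\widehat P]=[\widehat{\lambda_V}]=1$ in $H_1(\mathbf{S}^1\times\mathbf{S}^2)\cong\mathbf{Z}$, and $[\widehat P]$ is the winding number. Writing $E_P=V\setminus\nu(P)$, and using that for a winding-number-$1$ pattern the $\mathbf{S}^3$-Seifert longitude of $P(U)$ is exactly $\lambda_P$ (a direct $H_1$-computation), one has
\[
M_{P(U)}(m)\ =\ E_P\big(\,\partial V\!:\lambda_V\,;\ \ \partial\nu(P)\!:m\mu_P+\lambda_P\,\big),
\]
the right-hand side denoting $E_P$ Dehn filled along the two indicated slopes. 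Identifying $V^{\ast}\setminus\nu(\tau_m(P^{\ast}))\cong E_{P^{\ast}}$ via the meridional twist and tracking slopes --- here winding number $1$ again forces the twist to contribute only a $-m\mu_{V^{\ast}}$ term on $\partial V^{\ast}$ and nothing extra on $\partial\nu(P^{\ast})$ --- gives similarly
\[
M_{\tau_m(P^{\ast})(U)}(m)\ =\ E_{P^{\ast}}\big(\,\partial V^{\ast}\!:\lambda_{V^{\ast}}-m\mu_{V^{\ast}}\,;\ \ \partial\nu(P^{\ast})\!:m\mu_{P^{\ast}}+\lambda_{P^{\ast}}\,\big).
\]
Now push $M_{P(U)}(m)$ through the dualizability homeomorphism $f\colon E_P\to E_{P^{\ast}}$. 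Since $f(\partial V)=\partial\nu(P^{\ast})$ with $f(\lambda_V)=\lambda_{P^{\ast}}$, while $f(\partial\nu(P))=\partial V^{\ast}$ with $f(\mu_P)=-\mu_{V^{\ast}}$ and $f(\lambda_P)=\lambda_{V^{\ast}}$, the filling slope $\lambda_V$ goes to $\lambda_{P^{\ast}}$ and the filling slope $m\mu_P+\lambda_P$ goes to $\lambda_{V^{\ast}}-m\mu_{V^{\ast}}$; comparing with the two displays, $f$ carries one Dehn-filling description onto the other and hence descends to a homeomorphism $g\colon M_{P(U)}(m)\to M_{\tau_m(P^{\ast})(U)}(m)$. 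This is precisely where the twist $\tau_m$ of the statement originates: it is the unique modification on the $P^{\ast}$-side converting the transported slope $f(m\mu_P+\lambda_P)=\lambda_{V^{\ast}}-m\mu_{V^{\ast}}$ back into the longitudinal filling $\lambda_{V^{\ast}}$ dictated by the companion $U$. Finally $\mu_{P(U)}=\mu_P$ and $g(\mu_P)=f(\mu_P)=-\mu_{V^{\ast}}$, which in $M_{\tau_m(P^{\ast})(U)}(m)$ is isotopic to the core of the solid torus filling $\partial V^{\ast}$, hence to a meridian of $U$, hence --- since the winding number is $1$, so the meridian disk of $U$ meets $\tau_m(P^{\ast})(U)$ once --- to $\mu_{\tau_m(P^{\ast})(U)}$. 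By the initial reduction, $X_{P(U)}(m)\cong X_{\tau_m(P^{\ast})(U)}(m)$.

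I expect the slope-and-sign bookkeeping of the middle paragraph to be the only real difficulty: one must compute the $\mathbf{S}^3$-Seifert framing of a satellite of the unknot, pin down how the meridional twist acts on each of the two boundary tori of the pattern exterior, and check that the four identities defining dualizability interact with the two Dehn fillings so as to produce exactly the twist $\tau_m$ --- with the correct sign and no residual discrepancy --- a verification in which the winding-number-$1$ property, which silently cancels several would-be correction terms, must be used with care. Everything else is formal; since the defining homeomorphism $f$ of a dualizable pattern may be taken smooth, $g$ may be taken smooth too, and the conclusion is a diffeomorphism of $4$-manifolds.
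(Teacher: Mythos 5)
The decisive problem is your last step. The homeomorphism $g$ you build is induced by the dualizing map $f$, and $f(\mu_{P})=-\mu_{V^{\ast}}$; inside $M_{\tau_{m}(P^{\ast})(U)}(m)$ this curve is isotopic to the core of the solid torus filling $\partial V^{\ast}$, i.e.\ to a meridian of the companion unknot --- exactly the kind of auxiliary curve the paper denotes $\beta$ or $\gamma$ --- and \emph{not} to a meridian of the satellite knot. Your justification (``the meridian disk of $U$ meets $\tau_{m}(P^{\ast})(U)$ once'') conflates algebraic with geometric intersection number: the disk meets the satellite once only algebraically, so no isotopy follows. In fact the claimed isotopy is false in general: if $g$ carried $\mu_{P(U)}$ to $\mu_{\tau_{m}(P^{\ast})(U)}$, then after an ambient isotopy it would restrict to an orientation-preserving homeomorphism $\mathbf{S}^{3}\setminus\nu(P(U))\rightarrow\mathbf{S}^{3}\setminus\nu(\tau_{m}(P^{\ast})(U))$, and the Knot Complement Theorem would force $P(U)=\tau_{m}(P^{\ast})(U)$, contradicting the very examples the theorem is designed for (e.g.\ $P_{+}(U)=K$ versus $\tau_{m}(P_{+}^{\ast})(U)=T_{m}(A(K))$, which are generally distinct). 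What $g$ actually does is \emph{swap} the two distinguished curves: it sends the surgery dual of $P(U)$ to the companion meridian on the other side, and the companion meridian to the surgery dual of $\tau_{m}(P^{\ast})(U)$ (this is precisely the hypothesis $\phi(\beta)=\alpha_{K_{2}}$ of Theorem~\ref{thm:general}, not ``meridian to meridian''). So your argument does prove $M_{P(U)}(m)\cong M_{\tau_{m}(P^{\ast})(U)}(m)$, but the passage to the traces cannot be made through your reduction applied to this $g$; the essential content of the theorem is the four-dimensional handle argument of Miller and Piccirillo (the two traces arise from a common handlebody by cancelling different handle pairs), which the paper cites rather than reproves and which your proposal never supplies.

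There is also an inconsistency in the slope bookkeeping you yourself flag as the crux. Since the winding number is $1$, the meridional twist changes the distinguished longitude of the pattern too: $\tau_{m}(\lambda_{P^{\ast}})=\lambda_{\tau_{m}(P^{\ast})}+m\mu_{\tau_{m}(P^{\ast})}$, so pulling the $m$-surgery slope of $\tau_{m}(P^{\ast})(U)$ back to $E_{P^{\ast}}$ gives the slope $\lambda_{P^{\ast}}$ on $\partial\nu(P^{\ast})$, not $m\mu_{P^{\ast}}+\lambda_{P^{\ast}}$. With your second display the matching you assert fails, because $f(\lambda_{V})=\lambda_{P^{\ast}}\neq m\mu_{P^{\ast}}+\lambda_{P^{\ast}}$; with the corrected slope the matching does hold, but, as explained above, this only repairs the boundary homeomorphism, not the trace statement. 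A smaller issue: in your initial reduction, the uniqueness of the framing slope via the Knot Complement Theorem breaks down when the relevant knot is the unknot, so even that lemma needs more care than you give it.
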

%



%
%
\begin{rem}
For a dualizable pattern $P\subset V$, we see that $M_{P(U)\cup \mu _{V}}(0,0)\cong \mathbf{S}^3$. 
Conversely, for a link $k\cup c$ with $M_{k\cup c}(0,0)\cong \mathbf{S}^3$, we see that $k\subset \mathbf{S}^3\setminus \nu(c)$ is a dualizable pattern after giving some orientation to $k$ (for detail, see \cite{Baker-Motegi} and \cite[Remarks~3.3 and 4.6]{tagami6}). 

\end{rem}
%
%
\subsection{From special annulus presentations to dualizable patterns}\label{sec:annulus-dualizable}
In this section, we recall Miller and Piccirillo's construction (\cite[Section~5]{Miller-Piccirillo}) of dualizable patterns from a special annulus presentation (see also \cite{tagami6}). 
\par 
Let $K\subset \mathbf{S}^3$ be a knot with a special annulus presentation $(A, b)$. 
In Figure~\ref{figure:annulus-dualizable}, the left knots represent $K$, and each right knot represents $A^{\pm1}(K)$ for the corresponding left $K$. 
Then, for each case, take curves $\beta_{K}^{\pm} \subset \mathbf{S}^{3}\setminus \nu(K)$ as in Figure~\ref{figure:annulus-dualizable}. 
\par
Let $P_{+}$ (resp. $P_{-}$) be the pattern given by $K\subset V_{+}=\mathbf{S}^3\setminus \nu(\beta_{K}^{+})$ (resp. $K\subset V_{-}=\mathbf{S}^3\setminus \nu(\beta_{K}^{-})$), where we give a parameter of $V_{\pm}$ so that  $P_{\pm}(U)=K$. 
Moreover, we give an orientation of $P_{\pm}$ arbitrarily. 
Then, we can check that $P_{\pm}$ are dualizable patterns 
(for example, slide $K$ along the $0$-framing of $\beta_{K}^{\pm}$ in $M_{\beta_{K}^{\pm}}(0)\cong \mathbf{S}^1\times \mathbf{S}^2$ and apply Proposition~\ref{prop:MP}). 
These dualizable patterns satisfy the following. 
\begin{prop}[{e.g. \cite[Proposition~5.3]{Miller-Piccirillo} and \cite[Proposition~3.9]{tagami6}}]\label{prop:Miller-Piccirillo}
Let $K$ be a knot with a special annulus presentation $(A,b)$. 
Let $P_{+}$ and $P_{-}$ be the dualizable patterns as above. Then we have $P_{\pm}(U)=K$ and $P_{\pm}^{\ast}(U)=A^{\pm 1}(K)$. 
\end{prop}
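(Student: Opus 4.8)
The equality $P_{\pm}(U)=K$ requires essentially no work. By construction $P_{\pm}$ is just the knot $K$ regarded as a pattern inside the solid torus $V_{\pm}=\mathbf{S}^3\setminus\nu(\beta^{\pm}_{K})$, and one reads off from Figure~\ref{figure:annulus-dualizable} that $\beta^{\pm}_{K}$ is an unknot (this is implicit already in $M_{\beta^{\pm}_{K}}(0)\cong\mathbf{S}^1\times\mathbf{S}^2$). Hence $V_{\pm}$ is an unknotted solid torus, and with the parametrization coming from its standard position as $\mathbf{S}^3\setminus\nu(\beta^{\pm}_{K})$ one has, tautologically, $P_{\pm}(U)=K$; this also fixes the parameter of $V_{\pm}$ once and for all.

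The substance of the proposition is the identification $P^{\ast}_{\pm}(U)=A^{\pm 1}(K)$. The plan is to exhibit both the homeomorphism $f$ witnessing dualizability and the dual knot explicitly, extracting them from Osoinach-Teragaito's homeomorphism. First observe that
\[
V_{\pm}\setminus\nu(P_{\pm})=\mathbf{S}^3\setminus\nu\bigl(K\cup\beta^{\pm}_{K}\bigr),
\]
that $P_{\pm}$ is dualizable (slide $K$ over the $0$-framed $\beta^{\pm}_{K}$ inside $M_{\beta^{\pm}_{K}}(0)\cong\mathbf{S}^1\times\mathbf{S}^2$ and apply Proposition~\ref{prop:MP}), and that by the Remark after Theorem~\ref{thm:MP} this dualizability is equivalent to $M_{K\cup\beta^{\pm}_{K}}(0,0)\cong\mathbf{S}^3$. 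Unwinding the definition of the dual pattern, the required $f$ must interchange the two boundary tori of $\mathbf{S}^3\setminus\nu(K\cup\beta^{\pm}_{K})$, matching longitudes to longitudes and meridians to meridians up to sign, and $\mathbf{S}^3\setminus\nu(P^{\ast}_{\pm}(U))$ is then recovered by the $0$-filling of $\partial\nu(K)$ along $\lambda_{K}$. Tracing cores through, $P^{\ast}_{\pm}(U)$ is identified with the core of the $0$-surgery solid torus on $\beta^{\pm}_{K}$ sitting inside $M_{K\cup\beta^{\pm}_{K}}(0,0)\cong\mathbf{S}^3$, while $V^{\ast}_{\pm}$ becomes the complement of the core of the $0$-surgery solid torus on $K$.

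It remains to recognize that core as $A^{\pm 1}(K)$, and this is exactly what Theorem~\ref{thm:Osoinach} provides: the curve $\beta^{\pm}_{K}$ is drawn in Figure~\ref{figure:annulus-dualizable} so that the surgery description of $M_{K}(0)$ appearing above is carried by $\phi_{\pm 1}$ onto the one coming from $A^{\pm 1}(K)$ (compare the Remark after Theorem~\ref{thm:AJLO1}, where $\phi_{+1}$ sends a meridian of $K$ to $\gamma_{A(K)}$), whence the core above is taken to $A^{\pm 1}(K)\subset\mathbf{S}^3$. I expect the genuine difficulty to lie entirely in the bookkeeping that makes this rigorous: one must verify that the homeomorphism $f$ produced this way satisfies \emph{all four} conditions
\[
f(\lambda_{V_{\pm}})=\lambda_{P^{\ast}_{\pm}},\qquad f(\lambda_{P_{\pm}})=\lambda_{V^{\ast}_{\pm}},\qquad f(\mu_{V_{\pm}})=-\mu_{P^{\ast}_{\pm}},\qquad f(\mu_{P_{\pm}})=-\mu_{V^{\ast}_{\pm}}
\]
with the stated orientations, equivalently that the $0$-framings of $K$, of $\beta^{\pm}_{K}$ and of $A^{\pm 1}(K)$ are all matched correctly under the $\pm 1$-fold annulus twist along $A'$. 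This is where the hypothesis that $(A,b)$ is special enters: exactly as in Theorem~\ref{thm:Osoinach}, unknottedness of $A$ together with $\operatorname{lk}(c_1,c_2)=\pm 1$ is what keeps these framings integral and under control, so that $\phi_{\pm 1}$ and the induced $f$ behave as asserted.

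An alternative that avoids $\phi_{\pm 1}$ altogether is to argue entirely pictorially: use Proposition~\ref{prop:MP} to isotope $\widehat{P_{\pm}}$ to $\widehat{\lambda_{V_{\pm}}}$ in $\mathbf{S}^1\times\mathbf{S}^2$ by an explicit isotopy, then apply that same isotopy while recording its effect on the complementary solid torus, and read $P^{\ast}_{\pm}$ off from the resulting diagram, recognizing $P^{\ast}_{\pm}(U)$ there as $A^{\pm 1}(K)$. The computation is the same diagram chase; it is essentially the viewpoint of Miller and Piccirillo's original argument.
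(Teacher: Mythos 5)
Your proposal is correct in outline and follows essentially the route the paper itself uses: the paper does not reprove this proposition (it cites Miller--Piccirillo), but its proof of the general-$m$ statement, Theorem~\ref{thm:star-dualizable}, is exactly your argument specialized away from $m=0$ --- identify $\mathbf{S}^3\setminus\nu(P_{\pm}^{\ast}(U))$ with $M_{K}(0)\setminus\nu(\beta_{K}^{\pm})$ via dualizability (the Baker--Motegi description of the dual as the surgery dual of $\beta_{K}^{\pm}$ in $M_{K\cup\beta_{K}^{\pm}}(0,0)\cong\mathbf{S}^3$), carry it by the Osoinach--Teragaito homeomorphism $\phi_{\pm1}$, which sends $\beta_{K}^{\pm}$ to $\alpha_{A^{\pm1}(K)}$ (Remark~\ref{rem:regarding}), and conclude with the Knot Complement Theorem. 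The only cosmetic difference is that the paper closes the argument with Gordon--Luecke on complements rather than tracing cores (so the full four-condition bookkeeping you flag is not actually needed beyond the already-established dualizability), and the bookkeeping you defer is treated at the same level of detail in the paper (asserted from the figures).
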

\begin{rem}\label{rem:regarding}
The homeomorphisms given in Figure~\ref{figure:Osoinach-homeo} induces homeomorphisms 
\begin{align*}
&\phi_{\pm1}\colon (M_{K}(0), \beta_{K}^{\pm})\rightarrow (M_{A^{\pm1}(K)}(0), \alpha_{A^{\pm1}(K)}), 
\end{align*}
where $\alpha_{A^{\pm1}(K)} \subset \mathbf{S}^{3}\setminus \nu(A^{\pm1}(K))$ is a meridian of $A^{\pm1}(K)$. 
Here we regard $\beta_{K}^{\pm}$ and $\alpha_{A^{\pm1}(K)}$ as curves in $M_{K}(0)$ and $M_{A^{\pm1}(K)}(0)$ under the identifications 
\begin{align*}
\mathbf{S}^{3}\setminus \nu(K)&\cong M_{K}(0)\setminus \nu(L_{K}), \\
\mathbf{S}^{3}\setminus \nu(A^{\pm1}(K))&\cong M_{A^{\pm1}(K)}(0)\setminus \nu(L_{A^{\pm1}(K)}), \label{eq:identification1-2}
\end{align*}
respectively, where $L_{K}$ and $L_{A^{\pm1}(K)}$ are the corresponding surgery duals. 
\end{rem}
\begin{figure}[h]
\centering
\includegraphics[scale=0.715]{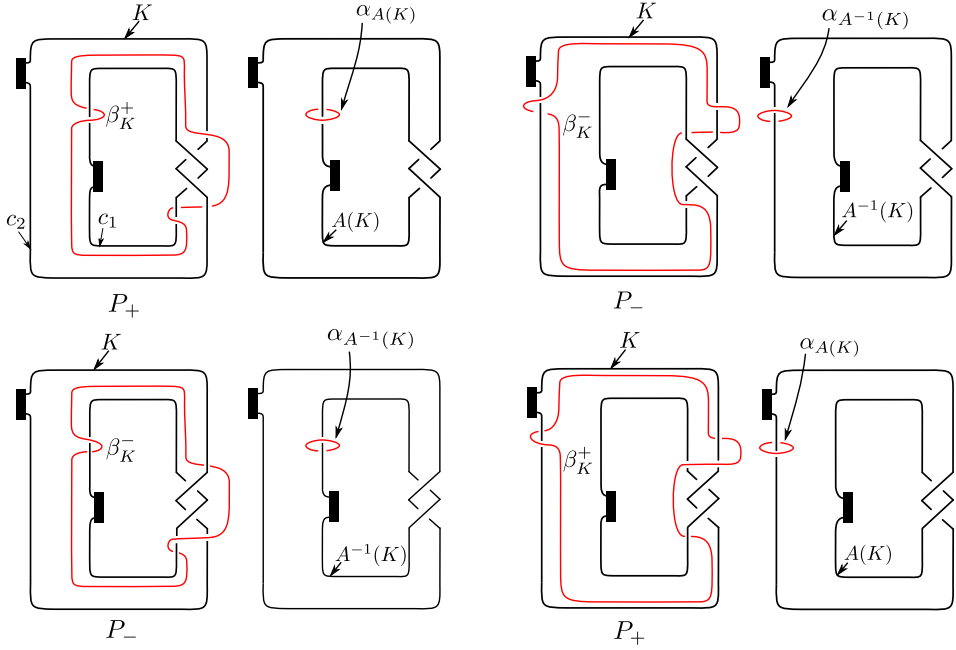}
\caption{(color online) From a special annulus presentation $(A,b)$ of a knot $K$ to dualizable patterns $P_{+}$ and $P_{-}$ given by $K\subset \mathbf{S}^{3}\setminus \nu(\beta_{K}^{\pm})=V_{\pm}$
}\label{figure:annulus-dualizable}
\end{figure}
%
%
%
\section{Operation $(\ast m)$ and dualizable pattern}\label{sec:dualizable2}
By Theorems~\ref{thm:AJLO1} and \ref{thm:MP}, for a knot $K$ with a special annulus presentation $(A,b)$, we have 
\[
 X_{\tau_{m}(P_{+}^{\ast})(U)}(m)\cong X_{P_{+}(U)}(m)= X_{K}(m)\cong X_{T_{m}(A(K))}(m), 
\]
where $P_{+}$ is the dualizable pattern obtained from $K$ as in Section~\ref{sec:annulus-dualizable}. 
Hence, it is a natural question whether $\tau_{m}(P_{+}^{\ast})(U)$ is isotopic to $T_{m}(A(K))$ or not. 
Proposition~\ref{prop:Miller-Piccirillo} implies that the answer is ``yes" if $m=0$. 
The following theorem gives the affirmative answer to this question for any $m\in\mathbf{Z}$. 
\begin{thm}\label{thm:star-dualizable}
Let $K$ be a knot with a special annulus presentation $(A,b)$. 
Let $P_{+}$ be the dualizable pattern obtained from $K$ as in Section~\ref{sec:annulus-dualizable}. 
Then, we obtain $\tau_{m}(P_{+}^{\ast})(U)=T_{m}(A(K))$ for any $m\in\mathbf{Z}$. 
\end{thm}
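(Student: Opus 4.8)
The plan is to unwind both sides of the claimed equality into surgery diagrams on a common framed link and check that they agree. The key observation is that all four knots appearing in the chain
\[
 X_{\tau_{m}(P_{+}^{\ast})(U)}(m)\cong X_{P_{+}(U)}(m)= X_{K}(m)\cong X_{T_{m}(A(K))}(m)
\]
come from the \emph{same} geometric picture, namely the two-component link $K\cup\beta_{K}^{+}$ together with the data of how the satellite/annulus-twist operations act on it; so the statement is really a diagrammatic identity rather than an abstract $4$-manifold isomorphism. First I would recall from Section~\ref{sec:annulus-dualizable} the explicit picture of $P_{+}$, i.e. the pattern $K\subset V_{+}=\mathbf{S}^{3}\setminus\nu(\beta_{K}^{+})$, and from Proposition~\ref{prop:Miller-Piccirillo} the fact that $P_{+}^{\ast}(U)=A(K)$, realized concretely by the $0$-surgery picture $M_{\beta_{K}^{+}}(0)\cong\mathbf{S}^{1}\times\mathbf{S}^{2}$ in which $K$ slides to $A(K)$.

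Next I would track the effect of $\tau_{m}$. By definition $\tau_{m}\colon V^{\ast}\to V^{\ast}$ is the $m$-fold meridional twist of the dual solid torus $V^{\ast}$. Under the identification of $\partial V_{+}\setminus\nu(P_{+})$ with $\partial V^{\ast}\setminus\nu(P_{+}^{\ast})$ given by the dualizing homeomorphism $f$ (which swaps $\mu_{V}\leftrightarrow -\mu_{P^{\ast}}$, $\lambda_{V}\leftrightarrow\lambda_{P^{\ast}}$, etc.), a meridional twist of $V^{\ast}$ corresponds to a twist along the curve $f^{-1}(\mu_{V^{\ast}})=-\mu_{P_{+}}$, i.e. along a meridian $\alpha$ of $P_{+}$ inside $V_{+}$. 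The point — this is the heart of the argument — is to identify that meridian, pushed into $\mathbf{S}^{3}\setminus\nu(\beta_{K}^{+})$ and then transported across the homeomorphism $\phi_{+1}\colon(M_{K}(0),\beta_{K}^{+})\to(M_{A(K)}(0),\alpha_{A(K)})$ of Remark~\ref{rem:regarding}, with the curve $\gamma_{A(K)}$ of Figure~\ref{figure:gamma} used to define $T_{m}(A(K))$. Concretely: $\tau_{m}(P_{+}^{\ast})(U)$ is obtained from $P_{+}^{\ast}(U)=A(K)$ by an $m$-twist along the image in the dual solid torus of a meridian of $P_{+}$; I must show this image is precisely $\gamma_{A(K)}$. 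I expect to do this by comparing the two $0$-surgery descriptions of $\mathbf{S}^{3}$: the curve $\beta_{K}^{+}$ with its meridian on one side, and $\alpha_{A(K)}$ (a meridian of $A(K)$) together with $\gamma_{A(K)}$ on the other, and invoking the already-established fact (Remark following Theorem~\ref{thm:AJLO1} and Remark~\ref{rem:regarding}) that Osoinach--Teragaito's homeomorphism $\phi_{+1}$ matches $\beta_{K}^{+}\mapsto\alpha_{A(K)}$ while $\phi_{+1}^{-1}$ matches $\gamma_{A(K)}$ with a meridian of $K$ — equivalently, that under $\phi_{+1}$ a meridian of $P_{+}$ goes to $\gamma_{A(K)}$.

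Having matched the twisting curves, the conclusion is a direct computation: twisting $A(K)$ $m$ times along $\gamma_{A(K)}$ is by definition $T_{m}(A(K))$, so $\tau_{m}(P_{+}^{\ast})(U)=T_{m}(A(K))$ as unoriented knots (orientations on patterns were chosen arbitrarily, so they play no role in the final statement). I would present the argument largely through the sequence of link diagrams — $K\cup\beta_{K}^{+}$, its $0$-surgery on $\beta_{K}^{+}$, the slide realizing $A(K)$, and the reintroduction of the meridian as $\gamma_{A(K)}$ — paralleling Figures~\ref{figure:Osoinach-homeo}, \ref{figure:gamma} and \ref{figure:annulus-dualizable}. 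The main obstacle I anticipate is the bookkeeping in the middle step: verifying that the meridian of $P_{+}$, after being carried through the dualizing homeomorphism \emph{and} through $\phi_{+1}$, lands on the curve $\gamma_{A(K)}$ with the correct framing, rather than on some other curve isotopic to it only after an additional twist. This requires being careful that the ``$m$-twist along a meridian of $V^{\ast}$'' in the definition of $\tau_{m}$ is normalized consistently with the ``$m$-twist along $\gamma_{A(K)}$'' in the definition of the operation $(\ast m)$; once the framings are pinned down, the rest is routine Kirby calculus already implicit in the cited results.
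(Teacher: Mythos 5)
Your strategy is sound, but it is genuinely different from the paper's. The paper works at the level of $m$-surgery: it uses the homeomorphism $\psi_{m}\colon M_{K}(m)\rightarrow M_{T_{m}(A(K))}(m)$ of Theorem~\ref{thm:AJLO1}, checks that $\psi_{m}(\beta_{K}^{+})=\alpha_{T_{m}(A(K))}$, builds a chain of homeomorphisms of knot exteriors $\mathbf{S}^3\setminus\nu(T_{m}(A(K)))\cong\cdots\cong\mathbf{S}^3\setminus\nu(\tau_{m}(P_{+}^{\ast})(U))$ (passing through $M_{K\cup\alpha_{K}}(0,-1/m)$ and the dualizing homeomorphism), and then concludes with the Knot Complement Theorem. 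You instead stay at the $0$-surgery level, use $\phi_{+1}$ together with the two facts $\phi_{+1}(\beta_{K}^{+})=\alpha_{A(K)}$ (Remark~\ref{rem:regarding}) and $\phi_{+1}(\alpha_{K})=\gamma_{A(K)}$ (the remark after Theorem~\ref{thm:AJLO1}), and aim to identify the companion circle of the dual pattern with $\gamma_{A(K)}$; once that is done, $\tau_{m}(P_{+}^{\ast})(U)=T_{m}(A(K))$ follows by the very definition of the operation $(\ast m)$, with no appeal to Gordon--Luecke. In effect you are proving the stronger, pattern-level statement first --- this is exactly the paper's Theorem~\ref{thm:main2} (that $P'_{+}=P_{+}^{\ast}$ as patterns), which the paper deduces \emph{after} and \emph{from} the chain in the proof of Theorem~\ref{thm:star-dualizable}. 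The one place where you are light is the step you yourself flag as ``bookkeeping'': knowing $f^{-1}(\mu_{V_{+}^{\ast}})=-\mu_{P_{+}}$ and $\phi_{+1}(\alpha_{K})=\gamma_{A(K)}$ only identifies the twisting curve if the identification $P_{+}^{\ast}(U)=A(K)$ of Proposition~\ref{prop:Miller-Piccirillo} is actually realized through $\phi_{+1}$ compatibly with the (a priori abstract) dualizing homeomorphism $f$; to close this you need either the uniqueness of the dual pattern together with a verification that the $\phi_{+1}$-induced map on the link exterior satisfies the four boundary conditions ($\lambda_{V}\mapsto\lambda_{P^{\ast}}$, $\mu_{V}\mapsto-\mu_{P^{\ast}}$, etc., with framings), or an explicit chain of homeomorphisms as in the paper's displays (\ref{eq:first})--(\ref{eq:last}) and (\ref{eq:add1})--(\ref{eq:add2}). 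That verification is the real content of the theorem rather than routine Kirby calculus, so it should be written out; with it in place, your route gives a clean proof that also yields Theorem~\ref{thm:main2} directly, whereas the paper's route defers the pattern-level identification and instead pays with the Knot Complement Theorem.
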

%

%
%
Miller and Piccirillo \cite[Proposition~5.3]{Miller-Piccirillo} proved Theorem~\ref{thm:star-dualizable} for $m=0$. 
We can prove Theorem~\ref{thm:star-dualizable} by extending Miller and Piccirillo's proof as follows. 
\begin{proof}
Let $L_{T_{m}(A(K))}^{(m)}\subset M_{T_{m}(A(K))}(m)$ be the surgery dual to $T_{m}(A(K))$. 
Let $\alpha_{T_{m}(A(K))}\subset \mathbf{S}^{3}\setminus \nu(T_{m}(A(K)))$ be a meridian of $T_{m}(A(K))$. 
Then, we can regard $\alpha_{T_{m}(A(K))}$ as a curve in $M_{T_{m}(A(K))}(m)$ by using the following identification
\begin{align}
\mathbf{S}^{3}\setminus \nu(T_{m}(A(K)))=M_{T_{m}(A(K))}(m)\setminus \nu (L_{T_{m}(A(K))}^{(m)}). \label{eq:first}
\end{align}
Since $\alpha_{T_{m}(A(K))}$ is isotopic to $L_{T_{m}(A(K))}^{(m)}$ in $M_{T_{m}(A(K))}(m)$, we have 
\begin{align}
M_{T_{m}(A(K))}(m)\setminus \nu (L_{T_{m}(A(K))}^{(m)})\cong M_{T_{m}(A(K))}(m)\setminus \nu (\alpha_{T_{m}(A(K))}). \label{eq:second}
\end{align}
\par 
Let $\beta_{K}^{+}\subset \mathbf{S}^3\setminus \nu(K)$ be the curve given in Section~\ref{sec:annulus-dualizable} (see also Figure~\ref{figure:annulus-dualizable}). 
We can also regard $\beta_{K}^{+}$ as a curve in $M_{K}(m)$ under the identification $\mathbf{S}^3\setminus \nu(K)\cong M_{K}(m)\setminus \nu(L_{K}^{(m)})$, where $L_{K}^{(m)}$ is the surgery dual to $K$. 
Then, we can check that $\psi_{m}(\beta_{K}^{+})=\alpha_{T_{m}(A(K))}$, where $\psi_{m}\colon M_{K}(m)\rightarrow M_{T_{m}(A(K))}(m)$ is given in Figure~\ref{figure:AJLO-homeo}. 
Hence, we obtain 
\begin{align}
M_{T_{m}(A(K))}(m)\setminus \nu (\alpha_{T_{m}(A(K))}) \label{eq:third}
&\cong M_{K}(m)\setminus \nu (\beta_{K}^{+})\\ \nonumber
&\cong M_{K\cup \alpha_{K}}(0,-1/m)\setminus \nu (\beta_{K}^{+})\\ \nonumber
&\cong \mathbf{S}^3\setminus \nu(K\cup \alpha_{K}\cup \beta_{K}^{+})\cup\bigsqcup_{i=0,1}(S_{i}^{1}\times D_{i}^{2}), \nonumber
\end{align}
where the last (small) union is given by identifying $\partial D_{0}^{2}$ with $0$-framing of $K$ and $\partial D_{1}^{2}$ with $-1/m$-framing of $\alpha_{K}$. 
\par 
Recall that the solid torus $V_{+}$ containing $P_{+}$ is given by $V_{+}=\mathbf{S}^{3}\setminus \nu(\beta_{K}^{+})$. 
Since, the $0$-framing of $K$ is viewed as $\lambda_{P_{+}}$ and $\alpha_{K}$ is viewed as $\mu_{P_{+}}$ in $V_{+}$, we have 
\begin{align}
\label{eq:fourth}
&\mathbf{S}^3\setminus \nu(K\cup \alpha_{K}\cup \beta_{K}^{+})\cup\bigsqcup_{i=0,1}(S_{i}^{1}\times D_{i}^{2}) \\
&\cong ((V_{+}\setminus \nu(P_{+}))\setminus \nu(\mu_{P_{+}}))\cup\bigsqcup_{i=0,1}(S_{i}^{1}\times D_{i}^{2}), \nonumber
\end{align}
where the last (small) union is given by identifying $\partial D_{0}^{2}$ with $\lambda_{P_{+}}$ and $\partial D_{1}^{2}$ with $-1/m$-framing of $\mu_{P_{+}}$. 
By the dualizability of $P_{+}$, we obtain 
\begin{align}
\label{eq:last}
&((V_{+}\setminus \nu(P_{+}))\setminus \nu(\mu_{P_{+}}))\cup\bigsqcup_{i=0,1}(S_{i}^{1}\times D_{i}^{2}) \\ \nonumber 
&\cong ((V_{+}^{\ast}\setminus \nu (P^{\ast}_{+}))\setminus \nu (\mu_{V^{\ast}_{+}}))\cup\bigsqcup_{i=0,1}(S_{i}^{1}\times D_{i}^{2})\\ \nonumber
&\cong (V_{+}^{\ast}\setminus \nu (\tau_{m}(P^{\ast}_{+})))\cup(S_{0}^{1}\times D_{0}^{2})\\ \nonumber
&\cong \mathbf{S}^3\setminus \nu(\tau_{m}(P^{\ast}_{+})(U)), \nonumber
\end{align}
where the last union is given by identifying $\partial D_{0}^{2}$ with $\lambda_{V_{+}^{\ast}}$. 
By (\ref{eq:first})--(\ref{eq:last}) and the Knot Complement Theorem, we obtain $\tau_{m}(P_{+}^{\ast})(U)=T_{m}(A(K))$. 
\end{proof}
\begin{rem}\label{rem:mirror}
Let $K$ be a knot with a special annulus presentation $(A,b)$. 
Let $\overline{K}$ be the mirror image of $K$ and $(\overline{A},\overline{b})$ be the special annulus presentation of $\overline{K}$ obtained from $(A,b)$ by taking mirror image. 
Let $\gamma_{A^{-1}(K)}\subset \mathbf{S}^{3}\setminus \nu(A^{-1}(K))$ be the mirror image of $\gamma_{\overline{A}(\overline{K})}\subset \mathbf{S}^{3}\setminus \nu(\overline{A}(\overline{K}))$ (see also Figure~\ref{figure:dual}). 
Denote the knot obtained from $A^{-1}(K)$ by twisting $m$ times along $\gamma_{A^{-1}(K)}$ by $T_{m}(A^{-1}(K))$. 
Then, by the similar discussion to Theorem~\ref{thm:star-dualizable}, we see that $\tau_{m}(P^{\ast}_{-})(U)=T_{m}(A^{-1}(K))$ for any $m\in \mathbf{Z}$. 
\end{rem}
We see that $A(K)\subset \mathbf{S}^{3}\setminus \nu(\gamma_{A(K)})=V'_{+}$ also gives a dualizable pattern, where the parameter of $V'_{+}\cong \mathbf{S}^1\times D^2$ is given by the standard way. 
Denote it by $P_{+}'$. 
It is easy to see that $\tau_{m}(P_{+}')(U)=T_{m}(A(K))=\tau_{m}(P_{+}^{\ast})(U)$ for any $m\in\mathbf{Z}$. 
So we can consider the question which asks whether $P'_{+}$ is equal to $P_{+}^{\ast}$ as a pattern. 
We can give the affirmative answer to the question as follows. 
\begin{thm}\label{thm:main2}
Let $K$ be a knot with a special annulus presentation $(A,b)$. 
Let $P_{+}'\subset V'_{+}$ be the dualizable pattern as above, and let $P_{+}^{\ast}\subset V^{\ast}_{+}$ be the dualizable pattern obtained from $K$ as in Section~\ref{sec:annulus-dualizable}. 
Then, for any $m\in \mathbf{Z}$, there is an orientation-preserving homeomorphism $h\colon V'_{+}\rightarrow V^{\ast}_{+}$ such that 
\begin{itemize}
\item $h(\tau_{m}(P'_{+}))=\tau_{m}(P_{+}^{\ast})$, 
\item $h(\lambda_{V'_{+}})=\lambda_{V^{\ast}_{+}}$ and $h(\mu_{V'_{+}})=\mu_{V^{\ast}_{+}}$. 
\end{itemize}
Namely, $P'_{+}=P^{\ast}_{+}$ as patterns. 
\end{thm}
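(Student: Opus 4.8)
The plan is to work in the $3$-manifold $\mathbf{S}^1\times\mathbf{S}^2$ and exploit Proposition~\ref{prop:MP} together with the explicit picture in Figure~\ref{figure:gamma}. Recall that $P'_+$ is the pattern $A(K)\subset V'_+=\mathbf{S}^3\setminus\nu(\gamma_{A(K)})$, while $P^{\ast}_+$ is, by Proposition~\ref{prop:Miller-Piccirillo}, the dual to the pattern $P_+$ coming from the special annulus presentation, and it satisfies $P^{\ast}_+(U)=A(K)$ as well. So both patterns have the same companion-independent knot $A(K)=P'_+(U)=P^{\ast}_+(U)$; the content is that the solid-torus embeddings (equivalently, the curves $\gamma_{A(K)}$ and $\beta^{+}_{A(K)}$, the latter being the image of $\mu_{V^{\ast}_+}$ under the identification) agree up to an ambient homeomorphism fixing the meridian and longitude of the ambient torus.

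The first step is to pin down $P^{\ast}_+$ concretely. From the proof of Theorem~\ref{thm:star-dualizable}, the complement $V^{\ast}_+\setminus\nu(P^{\ast}_+)$ is obtained from $M_{K\cup\alpha_K\cup\beta^{+}_K}$ after the surgeries described there, with $\lambda_{V^{\ast}_+}$ the core dual to the $0$-framing on $K$ and $\mu_{V^{\ast}_+}=\mu_{P_+}$ (so it corresponds to $\alpha_K$, the meridian of $K$). Using Remark~\ref{rem:regarding} — the Osoinach–Teragaito homeomorphism $\phi_{+1}\colon(M_K(0),\beta^{+}_K)\to(M_{A(K)}(0),\alpha_{A(K)})$ — one transports everything to the $A(K)$ side: under $\phi_{+1}$, the solid torus $V^{\ast}_+$ becomes $\mathbf{S}^3\setminus\nu(\text{curve})$ where the curve is $\phi_{+1}(\text{core dual to }\beta^{+}_K)$, and this curve is exactly the one drawn as $\gamma_{A(K)}$ in Figure~\ref{figure:gamma}: indeed, the Remark after Theorem~\ref{thm:AJLO1} already records that $\phi_{+1}$ carries $(M_K(0),\alpha_K)$ to $(M_{A(K)}(0),\gamma_{A(K)})$, and $\alpha_K$ is precisely $\mu_{V^{\ast}_+}$ up to the dualizing identifications, so the roles of $\gamma_{A(K)}$ and $\beta^{+}_{A(K)}$ get interchanged under dualizing — which is exactly the assertion $P'_+=P^{\ast}_+$.

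Concretely, I would assemble the homeomorphism $h\colon V'_+\to V^{\ast}_+$ as a composition: (i) the identification $V'_+=\mathbf{S}^3\setminus\nu(\gamma_{A(K)})\cong M_{A(K)}(0)\setminus\nu(L_{A(K)})$ regarding $\gamma_{A(K)}$ as sitting in $M_{A(K)}(0)$; (ii) the inverse Osoinach–Teragaito homeomorphism $\phi_{+1}^{-1}$, which by Remark~\ref{rem:regarding}/the Remark after Theorem~\ref{thm:AJLO1} sends the pair $(M_{A(K)}(0),\gamma_{A(K)})$ to $(M_K(0),\alpha_K)$ and $(M_{A(K)}(0),\alpha_{A(K)})$ to $(M_K(0),\beta^{+}_K)$; (iii) the dualizing homeomorphism $f\colon V_+\setminus\nu(P_+)\to V^{\ast}_+\setminus\nu(P^{\ast}_+)$, whose defining equations $f(\lambda_{V_+})=\lambda_{P^{\ast}_+}$, $f(\lambda_{P_+})=\lambda_{V^{\ast}_+}$, $f(\mu_{V_+})=-\mu_{P^{\ast}_+}$, $f(\mu_{P_+})=-\mu_{V^{\ast}_+}$ translate directly into the statements $h(\lambda_{V'_+})=\lambda_{V^{\ast}_+}$ and $h(\mu_{V'_+})=\mu_{V^{\ast}_+}$ (the signs cancel because we are unoriented and because two dualizations on the $A(K)$-side compose). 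Then $h$ automatically sends the core curve $A(K)=P'_+$ to $P^{\ast}_+$, and commuting $h$ past the meridional twists $\tau_m$ (which are supported in a collar of $\partial V$ where $h$ agrees with the identification on $\mu_V,\lambda_V$) gives $h(\tau_m(P'_+))=\tau_m(P^{\ast}_+)$ for every $m$.

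The main obstacle I anticipate is the bookkeeping of orientations and framings in steps (ii)–(iii): verifying that $\phi_{+1}$ indeed simultaneously interchanges the two pairs $(\gamma_{A(K)},\alpha_{A(K)})$ and $(\alpha_K,\beta^{+}_K)$ with the correct matching of $0$-framings, and that the composite of "dualize, transport by $\phi_{+1}$" is the identity on the $(\lambda_V,\mu_V)$-data rather than some twist. The cleanest way to settle this is to redo the computation of $V^{\ast}_+\setminus\nu(P^{\ast}_+)$ as in the proof of Theorem~\ref{thm:star-dualizable} but in the case $m=0$, identify $\lambda_{V^{\ast}_+}$ and $\mu_{V^{\ast}_+}$ explicitly as curves in $M_{A(K)}(0)\setminus\nu(L_{A(K)})$, and read off from Figure~\ref{figure:gamma} and Figure~\ref{figure:annulus-dualizable} that $\lambda_{V'_+}=\lambda_{V^{\ast}_+}$ and $\mu_{V'_+}=\mu_{V^{\ast}_+}$ there; the $\tau_m$ statement is then a formal consequence since twisting along $\mu_V$ is intrinsic to the boundary data. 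I expect the figure-level verification to be short once the identifications are set up, so the real work is purely in making those identifications precise.
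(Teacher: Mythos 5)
Your proposal is essentially the paper's own argument: the paper also builds $h$ by composing the identification coming from unwinding the definition of the operation $(\ast m)$ (which exhibits $(V'_{+}\setminus\nu(\tau_{m}(P'_{+})))$ filled along $\lambda_{V'_{+}}$ as $M_{T_{m}(A(K))}(m)\setminus\nu(\alpha_{T_{m}(A(K))})$) with the chain from the proof of Theorem~\ref{thm:star-dualizable} (the Osoinach--Teragaito/AJLO homeomorphism, which interchanges $\beta_{K}^{+}$ with the meridian, followed by the dualizing homeomorphism $f$), and then checks exactly the boundary data $\lambda_{V}$, $\mu_{V}$, $\lambda_{P}$ that you list. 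The only differences are cosmetic: you specialize to $m=0$ and recover general $m$ by conjugating the meridional twist $\tau_{m}$ (legitimate since $h$ preserves the oriented meridian), and your step (i) identification is mis-stated as written, but your proposed repair---redoing the Theorem~\ref{thm:star-dualizable} computation at $m=0$ and reading off $\lambda$ and $\mu$---is precisely what the paper does, uniformly in $m$.
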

\begin{proof}
By the definition of the operation $(\ast m)$, we see that  
\begin{align}
&M_{T_{m}(A(K))}(m)\setminus \nu (\alpha_{T_{m}(A(K))}) \label{eq:add1}\\\nonumber
&\cong M_{A(K)\cup \gamma_{A(K)}}(0,-1/m)\setminus \nu (\alpha_{A(K)}) \\\nonumber
&\cong \mathbf{S}^3\setminus \nu(A(K)\cup \gamma_{A(K)}\cup \alpha_{A(K)})\cup\bigsqcup_{i=0,1}(S_{i}^{1}\times D_{i}^{2}), \nonumber
\end{align}
where the last (small) union is given by identifying $\partial D_{0}^{2}$ with $0$-framing of $A(K)$ and $\partial D_{1}^{2}$ with $-1/m$-framing of $\gamma_{(A(K))}$. 
Since $\alpha_{A(K)}$ is isotopic to the surgery dual to $A(K)$, we have  
\begin{align}
&\mathbf{S}^3\setminus \nu(A(K)\cup \gamma_{A(K)}\cup \alpha_{A(K)})\cup\bigsqcup_{i=0,1}(S_{i}^{1}\times D_{i}^{2}) \label{eq:add2}\\\nonumber
&\cong \mathbf{S}^3\setminus \nu(A(K)\cup \gamma_{A(K)})\cup(S_{1}^{1}\times D_{1}^{2})\\\nonumber
&=(V'_{+}\setminus \nu (\tau_{m}(P_{+}')))\cup (S_{2}^{1}\times D_{2}^{2}),\nonumber
\end{align}
where the last union is given by identifying $\partial D_{2}^{2}$ with $\lambda_{V'_{+}}$. 
By considering the composition of (\ref{eq:add2}), (\ref{eq:add1}), (\ref{eq:third}), (\ref{eq:fourth}) and (\ref{eq:last}) we obtain an orientation-preserving homeomorphism
\[
\overline{h}\colon (V_{+}'\setminus \nu(\tau_{m}(P_{+}')))\cup (S_{2}^{1}\times D_{2}^{2})\rightarrow 
(V^{\ast}_{+}\setminus \nu(\tau_{m}(P^{\ast}_{+})))\cup(S_{0}^{1}\times D_{0}^{2}). 
\]
Then, we can check that 
\begin{itemize}
\item $\overline{h}(\lambda_{\tau_{m}(P'_{+})})=\lambda_{\tau_{m}(P_{+}^{\ast})}$, 
\item $\overline{h}(\lambda_{V'_{+}})=\lambda_{V^{\ast}_{+}}$ and $\overline{h}(\mu_{V'_{+}})=\mu_{V^{\ast}_{+}}$, 
\item $\overline{h}(S_{2}^{1}\times D_{2}^{2})=S_{0}^{1}\times D_{0}^{2}$.    
\end{itemize}
Hence, $\overline{h}$ induces a desired homeomorphism. 
\end{proof}
\begin{rem}\label{rem:mirror2}
Similarly, we can define $P'_{-}$ as $A^{-1}(K)\subset \mathbf{S}^{3}\setminus \nu(\gamma_{A^{-1}(K)})=V'_{-}$ (see also Remark~\ref{rem:mirror}). 
By the same discussion as the proof of Theorem~\ref{thm:main2}, we see that there is an orientation-preserving homeomorphism $h\colon V'_{-}\rightarrow V^{\ast}_{-}$ which satisfies 
$h(\tau_{m}(P'_{-}))=\tau_{m}(P_{-}^{\ast})$, 
$h(\lambda_{V'_{-}})=\lambda_{V^{\ast}_{-}}$ and $h(\mu_{V'_{-}})=\mu_{V^{\ast}_{-}}$. 
\end{rem}
\begin{rem}
We see that Theorem~\ref{thm:main2} induces Theorem~\ref{thm:star-dualizable} since $\tau_{m}(P_{+}^{\ast})(U)=\tau_{m}(P_{+}')(U)$ by Theorem~\ref{thm:main2} and $\tau_{m}(P_{+}')(U)=T_{m}(A(K))$ by the definition of $P'_{+}$.  
\end{rem}
\par
By Theorem~\ref{thm:main2} and Remark~\ref{rem:mirror2}, we can draw the duals $P^{\ast}_{\pm}$ to $P_{\pm}$ as in Figure~\ref{figure:dual}, where $P_{\pm}$ are the dualizable patterns obtained from a knot $K$ with a special annulus presentation $(A,b)$ as in Section~\ref{sec:annulus-dualizable}. 
\begin{figure}[h]
\centering
\includegraphics[scale=0.7]{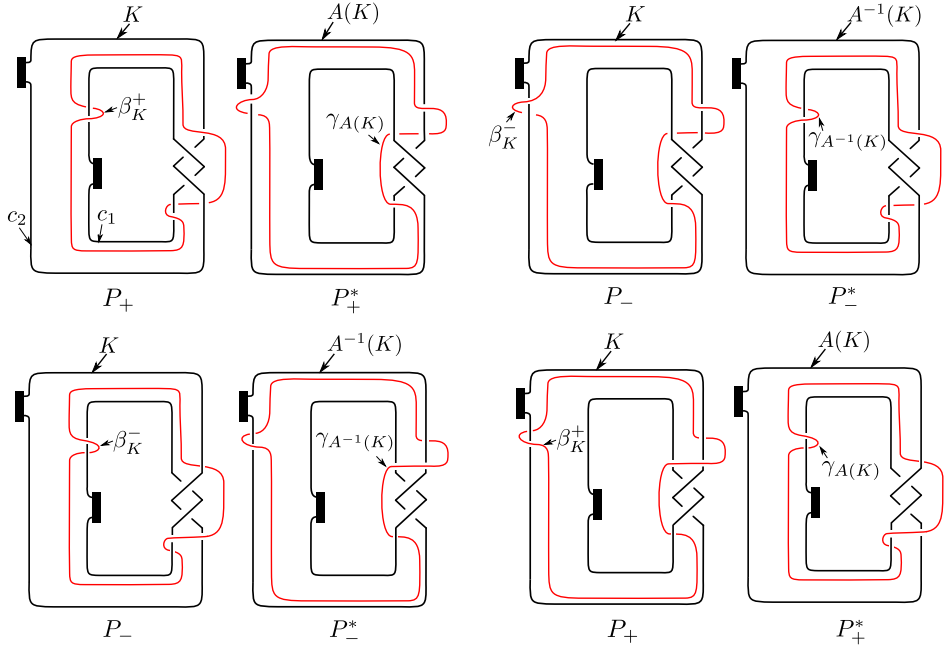}
\caption{(color online) The dualizable patterns $P_{\pm}\subset V_{\pm}=\mathbf{S}^{3}\setminus \nu(\beta_{K}^{\pm})$ and $P_{\pm}^{\ast}\subset V_{\pm}^{\ast}= \mathbf{S}^{3}\setminus \nu(\gamma_{A^{\pm1}(K)})$ }
\label{figure:dual}
\end{figure}
%
%
\section{Flipped annulus twist and operation $(\ast\pm4)$}\label{sec:Teragaito}
In \cite{Teragaito}, Teragaito gave the first example of a Seifert fibered manifold which is represented by the same integral surgery on infinitely many hyperbolic knots. 
In the work, Teragaito used a presentation of $9_{42}$, which is almost the same as a special annulus presentation but does not satisfy the last condition: $A\cup b$ is an immersion of an orientable surface. 
Teragaito explained that, for a knot with such a presentation, we obtain a family of knots admitting the same $4$-surgery (not $0$-surgery) by annulus twists along (a shrunken annulus of) the annulus. 
It has been known that such knots have the same $4$-trace (see \cite[Theorem~2.8]{AJOT}). 
\par 
In this section, we prove that the above phenomenon can be explained in terms of the operation $(\ast 4)$. 
\subsection{Flipped annulus twist}
Let $A\subset \mathbf{S}^3$ be an embedded annulus with ordered boundary $\partial A=c_1\cup c_2$. 
We suppose that $A$ is unknottend and $\operatorname{lk}(c_1,c_2)=\pm1$, where we give $c_1$ and $c_2$ parallel orientations. 
Then, an {\it $n$-fold flipped annulus twist along $A$} is to apply $(-lk(c_1,c_2)+1/n)$-surgery along $c_{1}$ and $(-lk(c_1,c_2)-1/n)$-surgery along $c_{2}$ (compare with Section~\ref{sec:annulus-twist-pre}). 
\par
Let $K$ be a knot with a special annulus presentation $(A,b)$. 
Then, by $A_{f}^{n}(K)$, 
we denote the knot obtained from $K$ by the $n$-fold flipped annulus twist along $A'$, where $A'$ is a shrunken annulus given in Section~\ref{sec:annulus-twist-pre}. 
For simplicity, we also denote $A_{f}^{1}(K)$ by $A_{f}(K)$. 
We also see $A_{f}^{n}(K)$ as follows: 
After ``flipping" $c_1$ (or $c_{2}$) as in Figure~\ref{figure:annulus-flip}, we find another annulus $A_{f}$. 
Then, by using \cite[Lemma~7.15]{abe-tagami3}, we see that $A_{f}^{n}(K)$ is obtained from $K$ by applying the $n$-fold annulus twist along $A'_{f}$, where $A'_{f}$ is a shrunken annulus of $A_{f}$. 
Remark that $(A_{f}, b)$ is not an annulus presentation any more since $A_{f}\cup b$ is an immersion of a non-orientable surface. 
\begin{figure}[h]
\centering
\includegraphics[scale=0.7]{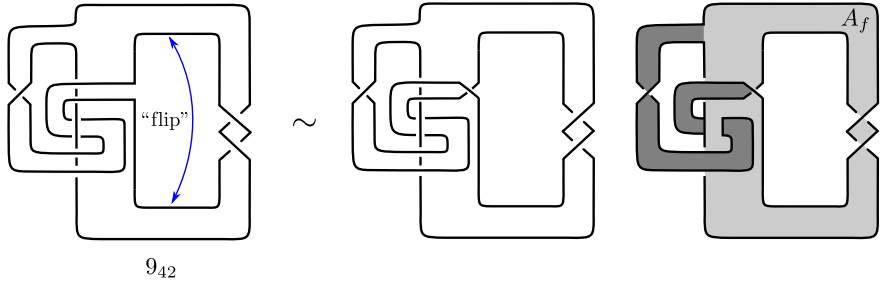}
\caption{(color online) An annulus presentation of $9_{42}$ (left). After ``flipping" $c_1$, we find a new annulus $A_{f}$. }
\label{figure:annulus-flip}
\end{figure}
%
\subsection{Relation to the operation $(\ast \pm4)$}
Teragaito \cite[Proposition~2.1]{Teragaito} proved that there is an orientation-preserving homeomorphism $M_{K}(r)\rightarrow M_{A^{n}_{f}(K)}(r)$, where 
$
r=-4\operatorname{lk}(c_1,c_2) \in \{\pm 4\}. 
$
Denote this homeomorphism by 
\[
\phi_{n}^{f}\colon M_{K}(r)\rightarrow M_{A^{n}_{f}(K)}(r). 
\]
For a sketch of the proof, see Figure~\ref{figure:Teragaito_homeo}. 
Then, we notice that 
\begin{align}
\phi_{\pm1}^{f}(\beta^{\mp}_{K})=\alpha_{A^{\pm1}_{f}(K)}, \label{eq:meridian}
\end{align}
where $\alpha_{A^{\pm1}_{f}(K)}$ is a meridian of $A^{\pm1}_{f}(K)$ and we regard $\beta^{\pm}_{K}$ and $\alpha_{A^{\pm1}_{f}(K)}$ as curves in $M_{K}(r)$ and $M_{A^{n}_{f}(K)}(r)$, respectively (by using the same discussion in Remark~\ref{rem:regarding}). 
We have seen that 
$M_{T_{r}(A(K))}(r)\cong M_{K}(r)\cong M_{A^{-1}_{f}(K)}(r)$. 
Moreover, we can prove that
\begin{align}
T_{r}(A^{\mp1}(K))= A^{\pm1}_{f}(K). \label{eq:flip}
\end{align}
In fact, by replacing $\psi_{m}$ with $\phi_{\pm1}^{f}$ and $\beta_{K}^{+}$ with $\beta_{K}^{\mp}$ in the proof of Theorem~\ref{thm:star-dualizable}, we see that 
$
\mathbf{S}^{3}\setminus \nu(A_{f}^{\pm1}(K))
\cong \mathbf{S}^{3}\setminus \nu(\tau_{r}(P^{\ast}_{\mp})(U))
\cong \mathbf{S}^{3}\setminus \nu(T_{r}(A^{\mp1}(K))). 
$
By the Knot Complement Theorem, we obtain Equation~(\ref{eq:flip}). 
%
%
As a consequence, we obtain the following. 
\begin{thm}\label{thm:teragaito-star-m}
Let $K$ be a knot with a special annulus presentation $(A,b)$ with $\partial A=c_1\cup c_2$. 
Then we obtain 
\[
T_{r}(A^{\mp1}(K))= A^{\pm1}_{f}(K), 
\]
where $r=-4\operatorname{lk}(c_1, c_2)$, and we give $c_1$ and $c_2$ parallel orientations. 
\end{thm}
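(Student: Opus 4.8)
The plan is to recognise that Theorem~\ref{thm:teragaito-star-m} is exactly Equation~(\ref{eq:flip}), and to turn the one-line indication given above it into a complete argument by re-running the proof of Theorem~\ref{thm:star-dualizable} with Teragaito's homeomorphism in place of $\psi_{m}$. Fix the sign and set $r=-4\operatorname{lk}(c_1,c_2)\in\{\pm4\}$. The proof then splits into two parts: first, the diagrammatic identity (\ref{eq:meridian}); second, the complement computation.

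First I would recall Teragaito's homeomorphism $\phi_{\pm1}^{f}\colon M_{K}(r)\to M_{A_{f}^{\pm1}(K)}(r)$ from \cite[Proposition~2.1]{Teragaito}, together with the description of it sketched in Figure~\ref{figure:Teragaito_homeo}. Using the surgery-dual identifications $\mathbf{S}^3\setminus\nu(K)\cong M_{K}(r)\setminus\nu(L_{K}^{(r)})$ and $\mathbf{S}^3\setminus\nu(A_{f}^{\pm1}(K))\cong M_{A_{f}^{\pm1}(K)}(r)\setminus\nu(L^{(r)}_{A_{f}^{\pm1}(K)})$ exactly as in Remark~\ref{rem:regarding}, I would then verify (\ref{eq:meridian}): that $\phi_{\pm1}^{f}$ carries the curve $\beta_{K}^{\mp}$ of Section~\ref{sec:annulus-dualizable} (see Figure~\ref{figure:annulus-dualizable}) to a meridian $\alpha_{A_{f}^{\pm1}(K)}$ of $A_{f}^{\pm1}(K)$. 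This is done by tracking $\beta_{K}^{\mp}$ through the moves of Figure~\ref{figure:Teragaito_homeo}, in the same manner in which the identity $\psi_{m}(\beta_{K}^{+})=\alpha_{T_{m}(A(K))}$ was checked inside the proof of Theorem~\ref{thm:star-dualizable}; the appearance of $\beta_{K}^{\mp}$ here rather than $\beta_{K}^{\pm}$ reflects the fact that the flipped annulus twist flips one boundary component of $A$, which exchanges the roles of the two curves $\beta_{K}^{+}$ and $\beta_{K}^{-}$ and accounts for the opposite superscripts in $A_{f}^{\pm1}$ and $A^{\mp1}$.

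Next I would reproduce the chain of homeomorphisms (\ref{eq:third})--(\ref{eq:last}) from the proof of Theorem~\ref{thm:star-dualizable} with $m$ replaced by $r$, $\psi_{m}$ by $\phi_{\pm1}^{f}$, $\beta_{K}^{+}$ by $\beta_{K}^{\mp}$, $T_{m}(A(K))$ by $A_{f}^{\pm1}(K)$, and the solid torus $V_{+}=\mathbf{S}^3\setminus\nu(\beta_{K}^{+})$ by $V_{\mp}=\mathbf{S}^3\setminus\nu(\beta_{K}^{\mp})$, in which, by the construction of $P_{\mp}$ in Section~\ref{sec:annulus-dualizable}, the $0$-framing of $K$ is $\lambda_{P_{\mp}}$ and $\alpha_{K}$ is $\mu_{P_{\mp}}$. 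Using (\ref{eq:meridian}), the fact that the meridian of $A_{f}^{\pm1}(K)$ is isotopic to its surgery dual in $M_{A_{f}^{\pm1}(K)}(r)$, and the dualizability of $P_{\mp}$, this gives $\mathbf{S}^{3}\setminus\nu(A_{f}^{\pm1}(K))\cong\mathbf{S}^{3}\setminus\nu(\tau_{r}(P^{\ast}_{\mp})(U))$. Since $\tau_{r}(P^{\ast}_{+})(U)=T_{r}(A(K))$ by Theorem~\ref{thm:star-dualizable} and $\tau_{r}(P^{\ast}_{-})(U)=T_{r}(A^{-1}(K))$ by Remark~\ref{rem:mirror}, we obtain $\mathbf{S}^{3}\setminus\nu(A_{f}^{\pm1}(K))\cong\mathbf{S}^{3}\setminus\nu(T_{r}(A^{\mp1}(K)))$, and the Knot Complement Theorem yields $A_{f}^{\pm1}(K)=T_{r}(A^{\mp1}(K))$, which is the assertion.

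The step I expect to be the main obstacle is the verification of (\ref{eq:meridian}): it is the only genuinely new input, and it requires an honest diagram chase through Teragaito's homeomorphism, together with careful bookkeeping of which boundary component of $A$ is flipped, hence of which of $\beta_{K}^{\pm}$ appears. Once (\ref{eq:meridian}) is in hand, the remainder is a mechanical transcription of the proof of Theorem~\ref{thm:star-dualizable} with the surgery coefficient specialised to $r=-4\operatorname{lk}(c_1,c_2)$.
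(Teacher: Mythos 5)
Your proposal is correct and follows essentially the same route as the paper: the author also proves the theorem by first noting the identity $\phi_{\pm1}^{f}(\beta_{K}^{\mp})=\alpha_{A_{f}^{\pm1}(K)}$ of (\ref{eq:meridian}) from Figure~\ref{figure:Teragaito_homeo}, then rerunning the complement computation of Theorem~\ref{thm:star-dualizable} with $\psi_{m}$ replaced by $\phi_{\pm1}^{f}$ and $\beta_{K}^{+}$ by $\beta_{K}^{\mp}$ to get $\mathbf{S}^{3}\setminus\nu(A_{f}^{\pm1}(K))\cong\mathbf{S}^{3}\setminus\nu(\tau_{r}(P^{\ast}_{\mp})(U))\cong\mathbf{S}^{3}\setminus\nu(T_{r}(A^{\mp1}(K)))$, and finally invoking Theorem~\ref{thm:star-dualizable}, Remark~\ref{rem:mirror} and the Knot Complement Theorem. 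Your identification of (\ref{eq:meridian}) as the only genuinely new input matches the structure of the paper's argument.
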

\begin{figure}[h]
\centering
\includegraphics[scale=0.72]{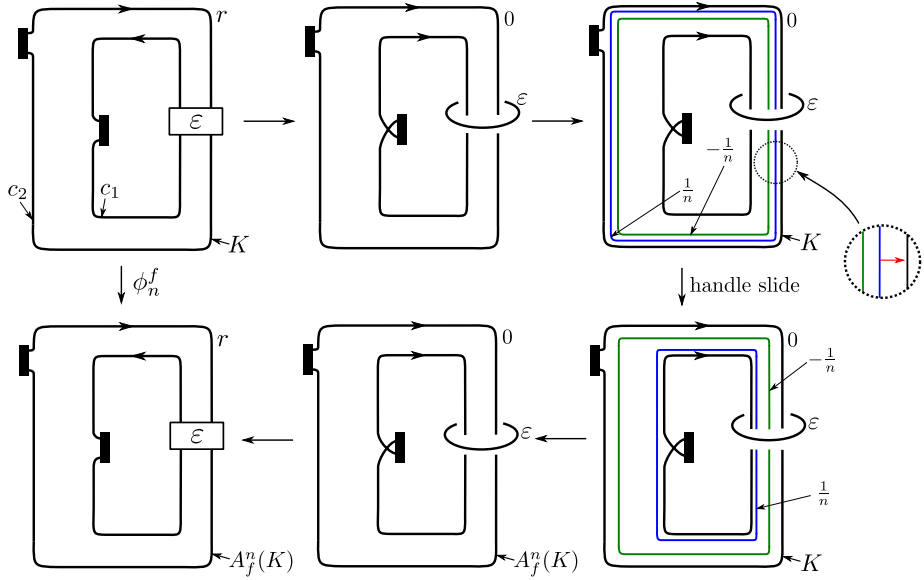}
\caption{(color online) . The homeomorphism $\phi_{n}^{f}\colon M_{K}(r)\rightarrow M_{A^{n}_{f}(K)}(r)$, where $\varepsilon\in\{\pm1\}$ and 
$r=-4\varepsilon$. The box with $\varepsilon$ represents $\varepsilon$-full-twist. For convenience, we draw an orientation of the knot (not $c_1$ and $c_2$). }
\label{figure:Teragaito_homeo}
\end{figure}
\begin{rem}
In private communication, Tetsuya Abe commented that $T_{4}(A(9_{42}))$ and $A^{-1}_{f}(9_{42})$ may be equivalent because of computational calculations. 
Theorem~\ref{thm:teragaito-star-m} is inspired by the comment. 
\end{rem}
%
\section{Discussions}
\subsection{Naturality}
Let $K$ be a knot with a special annulus presentation $(A,b)$. 
Then, we obtain a dualizable pattern $P_{+}$ as in Section~\ref{sec:annulus-dualizable}. 
Put $\check{K}=A(K)$ and give the natural annulus presentation $(\check{A},\check{b})$ of $\check{K}$ from $(A,b)$. 
Then we obtain another dualizable pattern $\check{P}_{-}$ from $\check{K}$ as in Section~\ref{sec:annulus-dualizable}. 
We see that these patterns satisfy $P_{+}(U)=K$, $P_{+}^{\ast}(U)=A(K)$, $\check{P}_{-}(U)=A(K)$ and $\check{P}_{-}^{\ast}(U)=K$. 
More strongly, Theorem~\ref{thm:main2} and Figure~\ref{figure:dual} imply that $P_{+}=\check{P}_{-}^{\ast}$ and $P_{+}^{\ast}=\check{P}_{-}$. 
\par
Let $SAP$ be the set of special annulus presentations and $DP$ be the set of unoriented patterns which are dualizable after giving some orientation. 
Then, by the above discussion, we obtain the following commutative diagram: 
%
\[
\begin{CD}
SAP @>\pm>> DP\\
@V{A^{\pm1}}VV @VV{ \ast}V\\
SAP @>\mp>> DP
\end{CD}
\]
where 
\begin{itemize}
\item $A^{\pm1}\colon SAP\rightarrow SAP$ is the map induced by $\pm1$-fold annulus twist, 
\item $\pm\colon SAP\rightarrow DP$ is given by $(A,b)\mapsto P_{\pm}$ as in Section~\ref{sec:annulus-dualizable} and 
\item $\ast\colon DP\rightarrow DP$ is given by $P\mapsto P^{\ast}$.  
\end{itemize}

\subsection{Generalization}
For more general setting, we obtain the following result by the proof of Theorem~\ref{thm:star-dualizable}. 
Remark that, in Theorem~\ref{thm:general} below, we regard curves $\beta$ and $\alpha_{K_{2}}$ as curves in $M_{K_{1}}(m)$ and $M_{K_{2}}(m)$, respectively, under the identification $\mathbf{S}^3\setminus \nu(K_{i})=M_{K_{i}}(m)\setminus \nu(L_{K_{i}})$, where $L_{K_{i}}$ is the surgery dual. 
\begin{thm}\label{thm:general}
Let $K_{1}$ and $K_{2}$ be knots in $\mathbf{S}^3$. 
Let $\beta\subset \mathbf{S}^3\setminus \nu(K_{1})$ be an unknot. 
Let $\alpha_{K_{2}}\subset \mathbf{S}^3\setminus \nu(K_{2})$ be a meridian of $K_{2}$. 
Suppose that $P=K_{1}\subset \mathbf{S}^{3}\setminus \nu(\beta)$ gives a dualizable pattern. 
Then, if there is an orientation-preserving homeomorphism $\phi\colon M_{K_{1}}(m)\rightarrow M_{K_{2}}(m)$ such that $\phi(\beta)=\alpha_{K_{2}}$, we have $\tau_{m}(P^{\ast})(U)=K_{2}$. 
Moreover $\phi$ extends to a diffeomorphism $\Phi\colon X_{K_1}(m)\rightarrow X_{K_{2}}(m)$. 
\end{thm}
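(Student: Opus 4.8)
The plan is to follow the skeleton of the proof of Theorem~\ref{thm:star-dualizable} essentially verbatim, observing that almost nothing in that argument used the specific annulus-presentation structure beyond two features: the existence of a homeomorphism of the $m$-surgered manifolds, and the fact that this homeomorphism carries the curve $\beta_K^+$ (an unknot defining the dualizable pattern) to a meridian of the target knot. Here we have exactly these hypotheses abstracted: $\phi\colon M_{K_1}(m)\rightarrow M_{K_2}(m)$ with $\phi(\beta)=\alpha_{K_2}$, and $P=K_1\subset \mathbf{S}^3\setminus\nu(\beta)=V$ dualizable. So I would simply re-run the chain of homeomorphisms \eqref{eq:first}--\eqref{eq:last} with $T_m(A(K))$ replaced by $K_2$, $K$ by $K_1$, $\psi_m$ by $\phi$, and $\beta_K^+$ by $\beta$.

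Concretely, first I would identify $\mathbf{S}^3\setminus\nu(K_2)$ with $M_{K_2}(m)\setminus\nu(L_{K_2}^{(m)})$ and note $\alpha_{K_2}$ is isotopic to $L_{K_2}^{(m)}$ in $M_{K_2}(m)$, so that $\mathbf{S}^3\setminus\nu(K_2)\cong M_{K_2}(m)\setminus\nu(\alpha_{K_2})$. Next, apply $\phi^{-1}$ to get $M_{K_2}(m)\setminus\nu(\alpha_{K_2})\cong M_{K_1}(m)\setminus\nu(\beta)$, then rewrite $M_{K_1}(m)$ as surgery on the link $K_1\cup\alpha_{K_1}$ with framings $(0,-1/m)$ (slam-dunk in reverse, exactly as in \eqref{eq:third}), obtaining $\mathbf{S}^3\setminus\nu(K_1\cup\alpha_{K_1}\cup\beta)$ with the two solid tori glued back along the $0$-framing of $K_1$ and the $-1/m$-framing of $\alpha_{K_1}$. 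Since $V=\mathbf{S}^3\setminus\nu(\beta)$ contains $P=K_1$, the $0$-framing of $K_1$ is $\lambda_P$ and $\alpha_{K_1}$ is $\mu_P$, so this space is $((V\setminus\nu(P))\setminus\nu(\mu_P))\cup\bigsqcup_{i=0,1}(S_i^1\times D_i^2)$ as in \eqref{eq:fourth}. Then dualizability of $P$ gives \eqref{eq:last}: $((V^{\ast}\setminus\nu(P^{\ast}))\setminus\nu(\mu_{V^{\ast}}))\cup\cdots\cong V^{\ast}\setminus\nu(\tau_m(P^{\ast}))\cup(S_0^1\times D_0^2)\cong\mathbf{S}^3\setminus\nu(\tau_m(P^{\ast})(U))$. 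Composing everything yields $\mathbf{S}^3\setminus\nu(K_2)\cong\mathbf{S}^3\setminus\nu(\tau_m(P^{\ast})(U))$, and the Knot Complement Theorem gives $\tau_m(P^{\ast})(U)=K_2$.

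For the last sentence, that $\phi$ extends to a diffeomorphism $\Phi\colon X_{K_1}(m)\rightarrow X_{K_2}(m)$, I would argue as follows. The $4$-manifold $X_{K}(m)$ is built from $M_{K}(m)\times[0,1]$ by attaching a $2$-handle along the surgery dual $L_{K}^{(m)}\subset M_{K}(m)\times\{1\}$ (with appropriate framing), so to extend $\phi$ over the handle it suffices that $\phi$ carry $L_{K_1}^{(m)}$ to a curve isotopic to $L_{K_2}^{(m)}$ and respect the framings. Since $L_{K_i}^{(m)}$ is isotopic in $M_{K_i}(m)$ to the meridian $\alpha_{K_i}$, and $\phi(\beta)=\alpha_{K_2}$ while $\beta$ is \emph{not} a priori $\alpha_{K_1}$, one first needs to see that $\phi$ can be isotoped so that it additionally sends $\alpha_{K_1}$ (equivalently $L_{K_1}^{(m)}$) to $\alpha_{K_2}$. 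This is where I expect the only real subtlety: one needs that, inside the complement $M_{K_1}(m)\setminus\nu(\beta)=\mathbf{S}^3\setminus\nu(K_1\cup\beta)$, the meridian $\alpha_{K_1}$ is determined up to isotopy by being a meridian, so that any homeomorphism of the pair $(M_{K_1}(m),\beta)\rightarrow(M_{K_2}(m),\alpha_{K_2})$ can be adjusted to also match the boundary parametrizations along $\partial\nu(K_1)$; then the framing condition is automatic because $X_{K}(m)$ is recovered from the exterior by Dehn-filling both boundary components, and the homeomorphism of exteriors constructed above already matches all the filling slopes (this is exactly what the gluing data in \eqref{eq:first}--\eqref{eq:last} records). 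Alternatively, and perhaps more cleanly, I would invoke the same mechanism used in Theorem~\ref{thm:Osoinach} and Theorem~\ref{thm:AJLO1}: the homeomorphism $\phi$ arises from a sequence of $\pm1/n$-surgeries along an unknot (the annulus-twist / $\gamma$-twist description), and such surgeries always extend across the trace because blowing up/down an unknot in $\mathbf{S}^3$ extends over $\mathbf{B}^4$; appealing to \cite[Theorem~2.8]{AJOT} in this generality packages the extension. I would state the extension step by reference to that circle of ideas rather than reproving it.

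The main obstacle, then, is purely bookkeeping in the first part — making sure the framings on the re-glued solid tori in \eqref{eq:third}--\eqref{eq:last} are tracked correctly once one works abstractly rather than from a picture — and, in the second part, justifying that the exterior homeomorphism respects the Dehn-filling slope on $\partial\nu(K_i)$ (i.e.\ sends $0$-framed longitude of $K_1$ to $0$-framed longitude of $K_2$, not some other slope); but this is forced by the fact that $\phi$ is a homeomorphism of the $0$-surgered pieces and that $H_1$ and the linking form pin down the meridian-longitude basis, so no genuinely new idea beyond the proof of Theorem~\ref{thm:star-dualizable} is required.
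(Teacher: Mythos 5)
Your handling of the first assertion is exactly the paper's: the paper proves it by rerunning the chain (\ref{eq:first})--(\ref{eq:last}) with $K$, $T_{m}(A(K))$, $\psi_{m}$, $\beta_{K}^{+}$ replaced by $K_{1}$, $K_{2}$, $\phi$, $\beta$, and your bookkeeping of the reglued solid tori and slopes matches that argument, ending with the Knot Complement Theorem.

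The gap is in the ``Moreover'' clause. Your main route asks to isotope $\phi$ so that it carries $\alpha_{K_{1}}$ (equivalently the surgery dual $L^{(m)}_{K_{1}}$) to $\alpha_{K_{2}}$ and then to extend handle-by-handle over the upside-down trace. But a homeomorphism taking surgery dual to surgery dual restricts to an orientation-preserving homeomorphism $\mathbf{S}^{3}\setminus\nu(K_{1})\rightarrow\mathbf{S}^{3}\setminus\nu(K_{2})$, and hence forces $K_{1}$ and $K_{2}$ to be isotopic by the Knot Complement Theorem. Since the theorem is of interest precisely when $K_{1}\neq K_{2}$ (e.g.\ $K_{2}=T_{m}(A(K_{1}))$), the isotopy you ``need'' does not exist in general, and no uniqueness-of-meridian argument inside $\mathbf{S}^{3}\setminus\nu(K_{1}\cup\beta)$ can rescue it: the whole point of the hypothesis is that $\beta$, not $\alpha_{K_{1}}$, is the curve sent to the dual knot. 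Your fallback---invoking the annulus-twist/blow-down mechanism of Theorems~\ref{thm:Osoinach} and~\ref{thm:AJLO1} via \cite{AJOT}---is also unavailable here, because Theorem~\ref{thm:general} assumes no annulus presentation and $\phi$ is an arbitrary homeomorphism subject only to $\phi(\beta)=\alpha_{K_{2}}$; and the remark that slope matching is ``forced by $H_{1}$ and the linking form'' proves too much, since homeomorphisms of $m$-surgeries need not extend over traces at all---this is exactly what the dualizability of $P$ is needed for. The paper's own argument for the extension is different and shorter: having established $K_{1}=P(U)$ and $K_{2}=\tau_{m}(P^{\ast})(U)$, it derives the trace diffeomorphism from the dualizable-pattern machinery behind Theorem~\ref{thm:MP} (\cite{Miller-Piccirillo}), in which the dual $2$-handle of $X_{K_{2}}(m)$ is attached along $\alpha_{K_{2}}=\phi(\beta)$ and the dualizability of $P=K_{1}\subset\mathbf{S}^{3}\setminus\nu(\beta)$ supplies the extension over that handle and the final $4$-handle. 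You should route the second claim through Theorem~\ref{thm:MP} rather than through either of your two suggestions.
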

The last claim follows from the same discussion as Theorem~\ref{thm:MP}. 
\begin{question}
Let $\phi\colon M_{K_{1}}(m)\rightarrow M_{K_{2}}(m)$ be an orientation-preserving homeomorphism. 
Then, when is there an unknot $\beta\subset \mathbf{S}^3\setminus \nu(K_{1})$ which satisfies the condition of Theorem~\ref{thm:general}? 
Moreover, if exists, is such $\beta$ unique up to isotopy in $\mathbf{S}^3\setminus \nu(K_{1})$?
\end{question}
%


%
\ \\
\noindent
\textbf{Acknowledgements.}
The author was supported by JSPS KAKENHI Grant number JP18K13416. 

\bibliographystyle{amsplain}
\bibliography{tagami}
\end{document}